\numberwithin{equation}{section}
\numberwithin{figure}{section}
\theoremstyle{definition}
\newtheorem{theorem}{Theorem}
\numberwithin{theorem}{section}
\newtheorem{lemma}[theorem]{Lemma}
\newtheorem{remark}[theorem]{Remark}
\newtheorem{proposition}[theorem]{Proposition}
\newtheorem{question}[theorem]{Question}
\newtheorem{definition}{Definition}
\newcommand{\R}{\mathbb{R}}
\newcommand{\Z}{\mathbb{Z}}
\newcommand{\N}{\mathbb{N}}
\newcommand{\Rr}{\mathbb{R}^d}
\def\Isom{{\rm Isom}}
\def \eps {\epsilon}
\def \P {{\Bbb P}}
\def\Var{{\rm Var}}
\def \E {{\Bbb E}}
\def \_reg {\rightarrow_{\bf reg}}
\def\maxdeg/{\Delta}
\def\max{{\rm max}}
\def\min{{\rm min}}
\def\dist{{\rm dist}}
\def\calp{{\cal P}}
\def\calP{{\cal P}}
\def\dist{{\rm dist}}
\def\Vor{{\rm Vor}}
\def\F{{\cal F}}
\def\X{{X}}
\def\Aut{{\rm Aut}}
\def \eps {\epsilon}
\def \P {{\bf P}}
\def \E {{\bf E}}
\def \_reg {\rightarrow_{\bf reg}}
\def\maxdeg/{\Delta}
\def\max{{\rm max}}
\def\min{{\rm min}}
\def\dist{{\rm dist}}
\def\Pcal{{\cal P}}
\def\Q{{\cal Q}}
\def\dist{{\rm dist}}
\def\Vor{{\rm Vor}}
\def\F{{\cal F}}
\def\calq{{\cal Q}}
\def\X{{X}}
\def\G{{\cal G}}
\def\A{{\cal A}}
\def\cali{{\cal I}}
\def\Area{{\rm Area}}
\begin{document}

\begin{frontmatter}
%%%%%%%%%%%%%%%%%%%%%%%%%%%%%%%%%%%%%%%%%%%%%%
%%                                          %%
%% Enter the title of your article here     %%
%%                                          %%
%%%%%%%%%%%%%%%%%%%%%%%%%%%%%%%%%%%%%%%%%%%%%%
\title{A nonamenable ``factor'' of a Euclidean space}
%\title{A sample article title with some additional note\thanksref{T1}}
\runtitle{A nonamenable ``factor'' of a Euclidean space}
%\thankstext{T1}{A sample of additional note to the title.}

\begin{aug}
%%%%%%%%%%%%%%%%%%%%%%%%%%%%%%%%%%%%%%%%%%%%%%
%%Only one address is permitted per author. %%
%%Only division, organization and e-mail is %%
%%included in the address.                  %%
%%Additional information can be included in %%
%%the Acknowledgments section if necessary. %%
%%%%%%%%%%%%%%%%%%%%%%%%%%%%%%%%%%%%%%%%%%%%%%
\author[A]{\fnms{\'Ad\'am} \snm{Tim\'ar}\ead[label=e1]{madaramit@gmail.com}},
%%%%%%%%%%%%%%%%%%%%%%%%%%%%%%%%%%%%%%%%%%%%%%
%% Addresses                                %%
%%%%%%%%%%%%%%%%%%%%%%%%%%%%%%%%%%%%%%%%%%%%%%
\address[A]{Alfr\'ed R\'enyi Institute of Mathematics and \\
University of Iceland, \printead{e1}}
\end{aug}

\begin{abstract}
Answering a question of Benjamini, we present an isometry-invariant random partition of the Euclidean space $\R^d$, $d\geq 3$, into infinite connected indistinguishable pieces, such that the adjacency graph defined on the pieces is the 3-regular infinite tree. 
Along the way, it is proved that any finitely generated one-ended amenable Cayley graph can be 
represented in $\R^d$ as an isometry-invariant random partition of $\R^d$ to bounded polyhedra, and also as an isometry-invariant random partition of $\R^d$ to indistinguishable pieces.
A new technique is developed to prove indistinguishability for certain constructions, connecting this notion to factor of iid's. 
\end{abstract}

\begin{keyword}[class=MSC2010]
\kwd[Primary ]{60D05}
%\kwd{???}
\kwd[; secondary ]{20P99}
\end{keyword}

\begin{keyword}
\kwd{Random tiling, isometry-invariant tiling, indistinguishability, factor of iid}
%\kwd{}
\end{keyword}

\end{frontmatter}
%%%%%%%%%%%%%%%%%%%%%%%%%%%%%%%%%%%%%%%%%%%%%%
%% Please use \tableofcontents for articles %%
%% with 50 pages and more                   %%
%%%%%%%%%%%%%%%%%%%%%%%%%%%%%%%%%%%%%%%%%%%%%%
%\tableofcontents

%%%%%%%%%%%%%%%%%%%%%%%%%%%%%%%%%%%%%%%%%%%%%%
%%%% Main text entry area:

%%%%%%%%%%%%%%%%%%%%%%%%%%%%%%%%%%%%%%%%%%%%%%
%% Supplementary Material, if any, should   %%
%% be provided in {supplement} environment  %%
%% with title inside \textbf{} and short    %%
%% description below.                       %%
%%%%%%%%%%%%%%%%%%%%%%%%%%%%%%%%%%%%%%%%%%%%%%
%\begin{supplement}
%\textbf{???}.
%???.
%\end{supplement}

\section{Introduction}\label{s.intro}

%Say that a subset of $\Rr$ is nice, if there exists a $c>0$ such that $P$ is the union of countably many, pairwise disjoint polyhedra $P$ %such that each $P$ contains some ball of radius $c$. Say that two nice subsets of $\Rr$ are adjacent, if the intersection of their closure 

\begin{definition}\label{maindef}{\bf (Tiling representation)}
Let $G$ be a finite or infinite graph. Say that the set $\calP$ is a {\it (locally finite) tiling of $\R^d$ that represents $G$
(or $\calP$ is a tiling representation of $G$, or $G$ is the adjacency graph of the tiling $\calP$)},
if the following hold.
\begin{enumerate}
\item Every element of $\calP$ is a connected open polytope (a {\it tile})
in $\Rr$. A polytope may be unbounded, with infinitely many hyperfaces.
\item The elements of $\calP$ are pairwise disjoint, the union of their closures is $\Rr$.
\item Every ball in $\R^d$ intersects finitely many elements of $\calP$.
\item Say that two elements of $\calP $ are adjacent if their closures share a $d-1$-dimensional face.
Then the graph defined on $\calP $ this way is isomorphic to $G$. 

We call the elements of $\calP$ {\it pieces} or {\it tiles} of $\calP$.
\end{enumerate}
\end{definition}

Representing a Cayley graph of a countable group $G$ as a {\it periodic tiling} of $\R^d$ is not possible for most $G$. A natural relaxation of periodicity is to take a {\it random tiling}, i.e., a probability measure on tilings, {\it whose distribution is invariant} under the isometries of $\R^d$. Instead of congruent tiles, one can ask for the probabilistic analogue of congruence, and require the tiles to be {\it indistinguishable}.

\begin{question}\label{qmain} (Itai Benjamini)
Is there an invariant random tiling representation of $T_3$ in $\R^3$ such that the tiles in 
this representation are indistinguishable?
\end{question}

Invariance is understood with regard to the isometries of $\R^3$, but we will look at other possible interpretations as well. 
By the {\it indistinguishability} of the tiles we mean the following. Let $\X$ be the set of all closed subsets of the Euclidean space $\Rr$, and consider the Hausdorff metric on it. Suppose that some $A\subset X$ is Borel measurable, and is closed under isometries of $\Rr$ (i.e., if a set is in $A$ then all its isometric copies are also in $A$). One can think of $A$ as the collection of subsets satisfying a certain measurable property: having some given congruence class, diameter at most $D$, some given lower/upper density, various topological properties... We say that the {\it pieces of a random partition of $\Rr$ are indistinguishable} if for any such $A$ either every piece of the partition is in $A$ almost surely, or none of them. It is easy to check that if there are bounded pieces with positive probability, then they are either all congruent, or they fail to be indistinguishable.

If the infinite 3-regular tree $T_3$ is embedded into $\R^d$ in an $\Aut (T_3)$-invariant way, then the expected number of vertices in a fixed cube is infinite; see Proposition \ref{nincsfa} for this folklore statement. 
(From now on, $\Aut (G)$ will denote the automorphism group of a given graph $G$. If $G$ is a diagram, i.e. it has colored and oriented edges, then $\Aut (G)$ stands for automorphisms that preserve the orientations and colors.) This implies, for example, that for an invariant point process of finite intensity, there is no way to define an invariant copy of $T_3$ on the configuration points as vertices.
%It follows from \cite{BK} that there exists no tiling as in Question \ref{qmain} if we want it to be in $\R^2$. 
It is not hard to prove that in $\R^2$ there exists no tiling as in Question \ref{qmain}, see Remark \ref{dim2}.
Given all these negative results, one may expect that a partition as in the question does not exist.
However, we prove that the answer to the question is positive. 
%Our construction works for any $\R^d$, $d\geq 3$; it is stated for $d=3$ to make the proof a bit simpler. 
We mention that in his original formulation, Benjamini did not require the tiles to be polyhedra, but any kind of pathwise connected domains. We included this condition in the definition of a representation by tiles because our construction works even with this extra requirement. 

\begin{theorem}\label{question}{\bf (Regular tree tiling in $\R^d$)}
For $d\geq 3$ there exists a random locally finite tiling of $\R^d$ that represents $T_3$, has indistinguishable pieces, and has a distribution that is invariant under the isometries of $\R^d$. The representation can also be viewed as an $\Aut (T_3)$-invariant map from $V(T_3)$ to the set of tiles.
\end{theorem}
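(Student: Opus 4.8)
The plan is to build the tiling in two stages: first realize the 3-regular tree $T_3$ as an isometry-invariant random collection of tiles that are \emph{bounded but of unbounded size} (using the amenable-representation machinery for the ``skeleton'' of $T_3$), and then enlarge/merge these tiles so that the pieces become the infinite pieces we want. More precisely, $T_3$ is nonamenable, so the key earlier result (representation of amenable unimodular random graphs in $\Rr$) does not apply directly to $T_3$. The trick is to find an amenable graph $H$ together with an $\Aut(T_3)$-equivariant way of partitioning $V(T_3)$ into finite ``clusters'' so that the adjacency pattern of clusters is still $T_3$, while the graph that records \emph{which clusters live in which congruent-tile-region} is amenable. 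A natural candidate: take a unimodular random spanning structure of $T_3$ whose components are finite (e.g.\ a component decomposition coming from a factor-of-iid percolation with finite clusters, together with a book-keeping tree telling how the clusters are nested), arranged so that the ``supergraph'' of how these finite clusters sit inside one copy of a fixed tessellation of $\Rr$ is $\Z^3$-like, hence amenable. Apply the amenable representation theorem to that amenable structure to get invariant polyhedral tiles, one per cluster; then glue the tiles belonging to a single $T_3$-vertex into one (still polyhedral, now unbounded) tile. Adjacency of two glued tiles holds iff the corresponding $T_3$-vertices are adjacent, by construction, giving Definition~\ref{maindef}(1)--(4).

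Concretely I would proceed as follows. \emph{Step 1.} Produce an $\Aut(T_3)$-invariant (unimodular) partition of $V(T_3)$ into infinite pieces $\{P_v\}_{v\in V(T_3)}$ indexed by $V(T_3)$, with $P_v$ adjacent to $P_w$ (in the sense that infinitely many $T_3$-edges run between them, or some canonical finite interface exists) iff $v\sim w$; crucially, arrange the internal combinatorics of each $P_v$ and the way the $P_v$'s interlock to be describable by an \emph{amenable} unimodular random rooted graph $\mathbf G$ — for instance by letting each $P_v$ be a one-ended subtree and using a horocyclic ``layering'' so that at the level of the layered structure everything looks like a tiling of a half-space. \emph{Step 2.} Invoke the amenable-representation theorem (the second result announced in the abstract) on $\mathbf G$ to get an isometry-invariant random polyhedral tiling $\calP_0$ of $\Rr$ whose adjacency graph is $\mathbf G$. \emph{Step 3.} Merge the tiles of $\calP_0$ that correspond to a common $v\in V(T_3)$ into a single open connected polyhedral (unbounded, infinitely many faces) tile $Q_v$; verify local finiteness (every ball meets finitely many $Q_v$) from local finiteness of $\calP_0$, and verify that $Q_v$ and $Q_w$ share a $(d-1)$-face iff $v\sim w$. \emph{Step 4.} Prove indistinguishability of the $Q_v$ using the new technique connecting indistinguishability to factors of iid: since the whole construction is a factor of an iid labelling (the amenable representation theorem is proved via factor-of-iid/invariant constructions, and Step 1 is a factor of iid on $T_3$), any isometry-closed measurable event $A$ pulls back to a tail-type event on the iid field; ergodicity of the iid field under the relevant isometry group, together with the fact that $\Aut(T_3)$ acts on $V(T_3)$ with the property that the indicator ``$Q_v\in A$'' is an equivariant $\{0,1\}$-factor hence constant by ergodicity, forces the $0$--$1$ dichotomy.

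The main obstacle is Step 1 together with making Step 4 go through: we need the finite/amenable intermediary structure $\mathbf G$ to (a) genuinely have adjacency graph $T_3$ \emph{after} merging, (b) be amenable as a unimodular random graph so the representation theorem applies, and (c) be produced as a factor of iid, which is what powers the indistinguishability argument. Balancing (a) and (b) is delicate because $T_3$'s nonamenability must be ``hidden'' in the unboundedness of the pieces rather than in the combinatorics of the tiling — this is exactly why the tiles end up infinite, and why $d\ge 3$ is needed (in $\R^2$ an invariant partition into infinitely many infinite pieces with tree adjacency is obstructed, cf.\ Remark~\ref{dim2}). I would also need the folklore fact (Proposition~\ref{nincsfa}) to rule out the naive approach of embedding $T_3$ itself with bounded-density vertices and taking Voronoi-type cells; the point of the construction is precisely to circumvent that by passing through an amenable model. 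Once $\mathbf G$ is in hand, Steps 2--3 are ``soft'' (polyhedral bookkeeping), and Step 4 is the promised general indistinguishability-from-factor-of-iid lemma applied in this setting.
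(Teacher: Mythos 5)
Your overall architecture is the right one and matches the paper's: pass through an amenable unimodular graph carrying an invariant partition into infinite connected pieces whose adjacency graph is $T_3$, represent that amenable graph by an isometry-invariant tiling with bounded tiles (Theorem \ref{mackovilag}/Theorem \ref{Gdomains}), merge the tiles over each piece, and derive indistinguishability from the factor-of-iid nature of the construction. But there is a genuine gap at the pivot point: you never produce the amenable intermediary, and the two candidates you sketch do not work. Partitioning $V(T_3)$ into \emph{finite} clusters so that ``the supergraph is amenable'' is impossible: contracting the finite classes of an invariant partition of a nonamenable unimodular graph yields a nonamenable graph (hyperfiniteness of the quotient would pull back to $T_3$), so no amenable bookkeeping graph can record that structure. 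Your alternative (infinite one-ended pieces with a ``horocyclic layering'') points in the right direction but is not a construction, and as stated (pieces of $V(T_3)$ indexed by $V(T_3)$ with adjacency again $T_3$) it is circular. The paper's key input here is Gaboriau's observation, which goes in the opposite direction: the Cayley graph of $\mathrm{BS}(1,2)=\langle a,b\mid a^{-1}ba=b^2\rangle$ is amenable, and its $b$-fibers (biinfinite paths) partition it into connected pieces whose adjacency graph is exactly $T_3$. One then tiles $\mathrm{BS}(1,2)$ by Theorem \ref{Gdomains} and takes unions of tiles over fibers; $T_3$ arises as a quotient of the amenable graph, rather than the amenable graph being built on top of $T_3$.

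Your Step 4 is also under-argued. Ergodicity of the iid labelling does not by itself make the indicator ``$Q_v\in A$'' constant over $v$ --- that constancy \emph{is} indistinguishability, which is exactly the nontrivial point (cf.\ \cite{LS}). What the paper actually uses is: (i) the fibers of $\mathrm{BS}(1,2)$ are indistinguishable \emph{deterministically, with scenery}, because the automorphism group of the Cayley diagram acts transitively on them while preserving the fiber structure; and (ii) the Decoration Lemma (Lemma \ref{lemma_fiid}), proved via a delayed-random-walk/Birkhoff argument, showing that a fiid decoration of an invariant percolation with indistinguishable infinite components keeps the decorated components indistinguishable. You would need both ingredients, plus the Duality Lemma (Lemma \ref{dual}) to pass from $\Aut(\mathrm{BS}(1,2))$-invariance of the construction to $\Isom(\R^3)$-invariance of the resulting tiling.
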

A locally finite invariant tiling representation of $T_3$ must have tiles of infinite volume, as shown by Proposition \ref{nincsfa}. Proposition
\ref{counterexample} further shows that without the indistinguishability constraint there is a simple example.

The second part of Theorem \ref{question} claims that the decoration of $T_3$ with the tiles will be invariant under the automorphisms of $T_3$. In fact, we will first construct such a decoration, and then show that the corresponding tiling is invariant under the group of isometries $\Isom (\R^d)$ of $\R^d$ ($d\geq 3$). To put it in a slightly simplified way, an important issue will be the distinction between random maps from $T_3$ that are $\Aut (T_3)$-invariant, and maps whose image set in $\R^d$ is $\Isom (\R^d)$-invariant. Section \ref{s.duality} will address some related questions.

A major ingredient is the following:

\begin{theorem}\label{mackovilag}{\bf (Tiling representation of amenable graphs)}
Let $G$ be a locally finite unimodular transitive {\it amenable} one-ended graph or decorated graph. Then for $d\geq 3$ there is an $\Isom (\R^d)$-invariant locally finite random tiling of $\R^d$ that represents $G$, such that every tile is bounded. The representation can also be viewed as a map from $V(G)$ to the set of tiles, which is $\Aut (G)$-invariant and moreover is a factor of iid.
\end{theorem}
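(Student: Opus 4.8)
The plan is to build the tiling in two stages: first produce an $\Aut(G)$-equivariant coupling of $G$ with a point process that lives in $\R^3$, then thicken the combinatorial structure into honest polyhedral tiles. Since $G$ is transitive, amenable and one-ended, I would start from the existence of a \emph{F\o lner exhaustion adapted to an iid labelling}: using the mass-transport principle and amenability, one obtains a factor-of-iid partition of $V(G)$ into finite connected ``blobs'' that are simultaneously (a) of arbitrarily large prescribed diameter, (b) one-ended complement behaviour in the sense that the blob-adjacency quotient is again nice, and (c) such that the iteration of this blobbing converges to the trivial partition — this is where one-endedness is used, to ensure the complement of a large finite set in $G$ has a controllable structure. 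In parallel, I would take a Poisson point process $\Pi$ on $\R^3$ of unit intensity with iid $[0,1]$-marks; this is the source of both the $\Isom(\R^3)$-invariance and the ``factor of iid'' randomness. The key coupling step is to equivariantly assign to each vertex $v\in V(G)$ a point $\phi(v)\in\Pi$ so that adjacency in $G$ is reflected by closeness (or by a selected matching) in $\R^3$; amenability lets one do this with distortion controlled on F\o lner sets, via a stable-matching or a hierarchical (multi-scale) matching scheme that is a factor of the marks.

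Next I would convert the vertex-embedding into a tiling. Around each $\phi(v)$ I take the Voronoi-type cell, but \emph{modified} so that two cells share a $2$-dimensional face exactly when $v\sim_G w$: this is done by a perturbation/surgery of the Voronoi diagram guided by the combinatorial graph $G$ — where $G$ has an edge $vw$ but the Voronoi cells of $\phi(v),\phi(w)$ don't touch, I route a thin polyhedral ``tube'' from one cell to the other along a path of auxiliary points (again chosen as a factor of $\Pi$), and where the Voronoi cells touch but $G$ has no edge, I pinch the shared face down to dimension $\le d-2$. Because $G$ is bounded-degree (transitive) and the matching has controlled distortion on F\o lner sets, each such tube has bounded combinatorial complexity and the tubes for distinct edges can be made disjoint; carrying this out carefully yields, for every $v$, a bounded connected open polytope $P_v$, with the $P_v$ pairwise disjoint, covering $\R^3$ up to a measure-zero set, and with the face-adjacency graph isomorphic to $G$. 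Boundedness of the tiles follows from the fact that each vertex is matched within a bounded F\o lner region and the tube lengths are bounded by the distortion.

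The main obstacle, and the heart of the argument, is the matching/embedding step: producing an $\Aut(G)$-equivariant factor-of-iid bijection from $V(G)$ to a point process in $\R^3$ with \emph{bounded distortion of the graph metric on F\o lner sets}, and doing it so that the resulting combinatorial-to-geometric dictionary is consistent enough to support the surgery. This is exactly where amenability is essential: one wants to transport the (intrinsically $d$-dimensional-volume-growth) point process onto the (possibly much faster or differently growing) graph $G$, and a naive greedy matching would create unboundedly long tubes. The fix is a renormalization/tower scheme — partition $G$ into F\o lner blobs, match blobs to roughly-round regions of $\R^3$ of the right volume (possible since the blobs are finite and amenability gives the isoperimetric slack to pack them), recurse inside each blob, and take the limit; the one-ended hypothesis guarantees the recursion does not get stuck on a "thin" part of $G$ and that the limiting tiles are single bounded pieces rather than being cut by accumulation of boundaries. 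Once the embedding with bounded distortion is in hand, the polyhedral surgery and the verification of $\Isom(\R^3)$-invariance (inherited from $\Pi$) and of the factor-of-iid property (everything was a measurable equivariant function of $\Pi$ and its marks) are essentially bookkeeping, modulo checking that the $2$-faces end up in ``general position'' so that the adjacency graph is exactly $G$ and nothing more.
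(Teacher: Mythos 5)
Your construction has the right general shape at the surgery stage (thickening a combinatorial incidence structure into polyhedra, routing thin tubes for edges that the raw geometry does not realize — this is essentially the paper's Lemma \ref{addedge}), but there are two substantive problems with the way you set up the coupling, and they go to the heart of what the theorem asserts.

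First, the factor-of-iid claim. The theorem requires the map $V(G)\to\{\text{tiles}\}$ to be a factor of iid labels \emph{on $V(G)$}; this is essential for the later application of the Decoration Lemma to BS(1,2). Your construction instead introduces a Poisson process $\Pi$ on $\R^3$ as an independent, external source of randomness and builds the tiling as a function of $\Pi$ (and its marks) together with the graph labels. The resulting object is not a factor of iid labels on $V(G)$ — the Poisson process has its own spatial structure that cannot be manufactured from an abstract countable family of iid uniforms without already having a geometry in hand. This is not a cosmetic issue: it is precisely the problem the paper's construction is designed to solve. The paper builds a copy of the Cayley diagram $\Z^3$ \emph{on a subset of $V(G)$} as a fiid (Theorem \ref{Tdomains}, using a fiid one-ended spanning tree from \cite{T3} and the dyadic partition scheme of Lemma \ref{Tpartition}); the geometry of $\R^3$ is then the expansion of this internally constructed $\Z^3$, and $\Isom(\R^3)$-invariance is recovered a posteriori via the Duality Lemma \ref{dual} and a uniform random coset representative in $\Isom(\R^3)/\Aut(\Z^3)$. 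Your plan tries to get $\Isom(\R^3)$-invariance for free from $\Pi$, at the cost of losing the fiid property on the $G$ side.

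Second, the "bounded distortion of the graph metric on F\o lner sets" step is not attainable in general and your argument seems to lean on it. If $G$ has volume growth strictly larger or smaller than cubic (e.g.\ $\Z^5$, or an amenable group of exponential growth), no bijection of $V(G)$ with a unit-intensity stationary point process in $\R^3$ can have distortion bounded on F\o lner sets: a F\o lner set of $n$ vertices must occupy Euclidean volume of order $n$, hence diameter of order $n^{1/3}$, while the $G$-diameter of the set may be of any other order. Your claim that "each such tube has bounded combinatorial complexity" relies on this bounded distortion; without it, a fixed Voronoi cell could be traversed by unboundedly long tubes from unboundedly many edges, and it becomes unclear how to keep the tubes disjoint and each tile bounded. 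The paper sidesteps this entirely: it never asks for any metric compatibility between $G$ and $\R^3$. Instead, the long-range combinatorics is absorbed by routing each extra edge $e\in E(G)\setminus E(T)$ along the tree path $\pi_e$, and the boundedness of every tile and local finiteness of the tiling are deduced from the nested structure of the dyadic partition (Theorem \ref{Tdomains}) rather than from metric control. If you want to keep a Poisson-process-based approach, you would have to prove the coupling and surgery work without any distortion bound, which is a different and harder argument; and you would still face the first objection about where the iid labels live.
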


%It is easy to see that a tiling as in the theorem is not possible when $G$ is nonamenable. Hence the theorem can be viewed as a characterization of the Ornstein-Weiss theorem; see also the remark after Theorem \ref{OW}. 

See Definition \ref{fiid} for the definition of a factor of iid (fiid). 
We will prove the above theorem for the more general case of amenable unimodular random graphs $G$; see Theorem \ref{Gdomains}. The tiles in the theorem will not only be bounded polyhedra, but they will have only finitely many 0-faces (vertices), and hence finitely many faces of any dimension. The case of representing unimodular random {\it planar} graphs by invariant tilings in the {\it plane} has been investigated in a joint work with Benjamini, \cite{BT}, as a follow-up to the present work.

Returning to the initial question, to what extent are Cayley graphs $G$ representable as an invariant tiling of $\R^d$ with {\it indistinguishable} tiles, our method gives a positive answer not only for $T_3$ but also for all amenable groups.

\begin{theorem}\label{amenindist}{\bf (Amenable graphs by indistinguishable pieces)}
Let $G$ be a locally finite Cayley graph of an infinite {\it amenable} group. Then for $d\geq 3$ there is a random tiling of $\R^d$ with indistinguishable tiles, such that the adjacency graph of the tiling is $G$, and such that the tiling is invariant under the isometries of $\R^d$. The representation can also be viewed as an $\Aut (G)$-invariant map from $V(G)$ to the set of tiles.
\end{theorem}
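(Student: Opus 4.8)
The plan is to reduce Theorem \ref{amenindist} to the case of one-ended amenable graphs handled by Theorem \ref{mackovilag}, since an arbitrary transitive amenable graph $G$ need not be one-ended. If $G$ has exactly one end, then Theorem \ref{mackovilag} already provides an $\Isom(\R^3)$-invariant factor of iid tiling of $\R^3$ representing $G$ with bounded tiles, and the only thing left is to verify that bounded tiles produced by a factor of iid are automatically indistinguishable. If $G$ has more than one end, then by Stallings-type structure theory for groups (or, in the transitive amenable setting, because the only options are $0$, $2$, or infinitely many ends) we handle the two remaining cases separately: for two ends $G$ is quasi-isometric to $\Z$, and for infinitely many ends an amenable transitive graph cannot occur (a transitive graph with infinitely many ends is nonamenable), so in fact $G$ has one or two ends. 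The two-ended case can be treated directly by an explicit construction, tiling a random bi-infinite ``cylinder'' direction of $\R^3$; alternatively one observes that the $\Z$-like graph is a monotone limit / can be exhausted in a way that still fits the factor-of-iid framework.

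The main work is the indistinguishability step, and here is where I expect to use the new technique the abstract advertises: \emph{a factor of iid partition of $\R^3$ with bounded pieces automatically has indistinguishable pieces}. The argument should go as follows. Let $A\subset\X$ be Borel and isometry-closed, and suppose that with positive probability some tile lies in $A$ and with positive probability some tile lies outside $A$. Because the tiling is a factor of iid from an $\Aut(T_3)$-invariant (equivalently, after the construction, an $\Isom(\R^3)$-invariant) source, the event that ``the tile containing a given point is in $A$'' is a measurable, translation-equivariant function of the underlying iid field. One then uses ergodicity of the iid field under $\Isom(\R^3)$ together with the key mixing/insertion-tolerance-type property of factors of iid to derive a contradiction: by a local modification (rerandomizing the iid labels in a large ball, which only affects the tiling within a bounded enlargement of that ball since the tiles are bounded and the factor is — up to an arbitrarily small error event — locally determined) one can, with positive probability, turn a tile that is in $A$ into one that is not, while keeping the rest of the configuration in a prescribed positive-probability pattern; invariance then forces the $A$-tiles and the non-$A$-tiles to be ``everywhere at once'', contradicting that each point lies in exactly one tile. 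Making the ``locally determined up to small error'' claim precise — i.e. that a factor of iid can be approximated by a block factor with error going to $0$ — is the standard fact one invokes here, and the boundedness of the tiles is what makes the region of influence finite.

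The hardest part will be the indistinguishability argument in the form that works for \emph{any} isometry-closed $A$ simultaneously, rather than for a fixed $A$; one wants the conclusion ``all tiles in $A$ a.s.\ or none'' as a dichotomy for each $A$, so it suffices to argue for one $A$ at a time, but the argument must be robust enough not to use anything about $A$ beyond measurability and isometry-invariance. Concretely, the obstacle is controlling the effect of the local rerandomization on the \emph{identity} of the tile one is trying to move into or out of $A$: one must ensure that after the modification there is still a single well-defined tile near the origin, that its shape has genuinely changed (to cross the boundary of $A$ when $A$ separates some tiles from others — this uses that $A$ is nontrivial, so there exist two isometry classes of shapes realized with positive probability and some perturbation interpolates between them), and that all of this happens on an event of positive probability compatible with the rest of the picture. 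I would organize this around the lemma ``a one-ended amenable Cayley/unimodular graph represented as a bounded-tile factor of iid has indistinguishable tiles,'' prove that lemma using the block-factor approximation and ergodicity, and then note that Theorem \ref{mackovilag} plus the reduction to at-most-two-ended graphs yields Theorem \ref{amenindist}; the two-ended case is then a small explicit addendum.
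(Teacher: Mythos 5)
Your proposal rests on the claim that ``a factor of iid partition of $\R^3$ with bounded pieces automatically has indistinguishable pieces,'' and this is false --- in fact the paper explicitly notes the opposite in the introduction: if there are bounded tiles with positive probability, then the tiles are either all congruent or they fail to be indistinguishable. A generic fiid tiling with bounded tiles will realize more than one congruence class of tile (for instance, some tiles will have larger volume than others, or a different number of faces), and taking $A$ to be the set of open sets of a given volume or combinatorial type already separates them. Ergodicity of the iid field only forces \emph{invariant events about the whole configuration} to be $0$--$1$; it says nothing about different tiles within one configuration having the same local statistics, which is precisely what indistinguishability asks for. The rerandomization/block-factor argument you sketch would show that the \emph{law of the tile at the origin} is well spread out, but it cannot prevent two coexisting tiles in the same sample from having different measurable shapes. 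So the one-ended branch of your reduction does not go through, and consequently neither does the theorem by your route: Theorem~\ref{mackovilag} gives bounded tiles, and bounded tiles are incompatible with indistinguishability except in the degenerate all-congruent case.

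The paper's intended argument for Theorem~\ref{amenindist} is the same mechanism used for Theorem~\ref{question}, not a direct appeal to Theorem~\ref{mackovilag}. One chooses an amenable one-ended Cayley diagram $H$ carrying a \emph{deterministic} $\Aut(H)$-invariant partition into infinite connected ``fibers'' whose adjacency graph is $G$ and which are indistinguishable for trivial symmetry reasons (for $T_3$ this was $H=\mathrm{BS}(1,2)$ with $b$-orbit fibers; for a general amenable transitive $G$ one can take $H = G\times \Z$ with fibers the $\Z$-cosets). Then one applies Theorem~\ref{Gdomains} to obtain an $\Isom(\R^3)$-invariant fiid bounded-tile representation of $H$, takes unions of tiles over each fiber to get unbounded tiles representing $G$, and invokes the Decoration Lemma (Lemma~\ref{lemma_fiid}): since the fibers are indistinguishable and the tiling is a fiid decoration, the decorated fibers --- hence the unions of tiles --- remain indistinguishable. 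The $\Isom(\R^3)$-invariance comes from the Duality Lemma as in the $T_3$ case. So the tiles in Theorem~\ref{amenindist} are necessarily \emph{unbounded}, which is the step your proposal misses entirely; the whole point of the fibering construction is to trade bounded-but-distinguishable tiles for unbounded-but-indistinguishable ones.
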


In the remainder of this section, we sketch the proof of Theorem \ref{question} from Theorem \ref{mackovilag}.

\noindent {\bf Observation} (Damien Gaboriau) \hspace{0.1in} The usual Cayley graph of the Baumslag-Solitar group BS(1,2)$=\langle a,b|a^{-1}ba=b^2\rangle$ can be partitioned into connected pieces such that the adjecency graph between the pieces is $T_3$.

\vspace{0.1in} 
Choose the pieces for this partition to be the orbits of the generator $b$. We will call them {\it fibers}. We leave a formal proof that the adjacency graph on the fibers is $T_3$ to the interested reader, see Figure \ref{BS2} for an illustration.

\begin{figure}[h]
\vspace{0.2in}
\begin{center}
\includegraphics[keepaspectratio,scale=0.7]{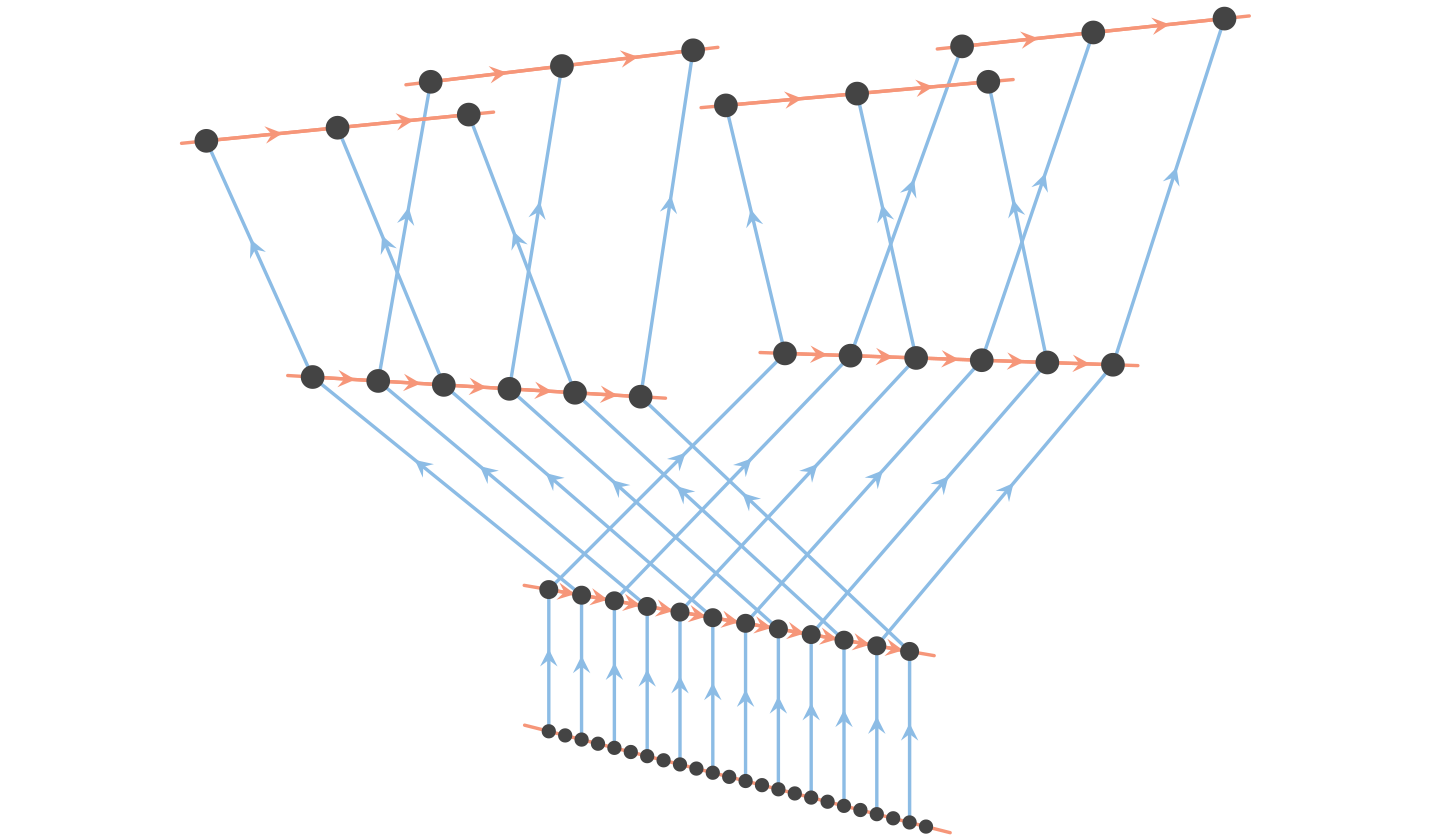}
\caption{Part of the Cayley graph of BS(1,2). Red lines (``fibers'') as partition classes provide a partition whose adjacency graph is $T_3$. (Image by Jens Bossaert.)}\label{BS2}
\end{center}
\end{figure}

The proof of Theorem \ref{question} is based on the following steps: 

\begin{enumerate}
\item Represent BS(1,2)$=:G$ in $\R^d$ as an isometry-invariant tiling, using Theorem \ref{mackovilag}.
\item Take the unions of tiles over each fiber in this representation (more precisely, the interior of the union of their closures), to obtain a representation of $T_3$.

\end{enumerate}

Much of the work will be in ensuring that the resulting tiles are in fact indistinguishable. Proving indistinguishability is usually highly non-trivial, see \cite{LS} for the case of the infinite components of Bernoulli percolation on Cayley graphs. A definition of indistinguishable {\it decorated connected components} of an automorphism-invariant random subgraph (percolation) of an underlying Cayley graph will be needed, which will be similar to our definition of indistinguishable tiles of a tiling, as above.
Components of a percolation on a Cayley graph turn into a unimodular random graph, when looking at the component of the origin, with the rest of the Cayley graph thought of as a (unimodular) decoration. Indistinguishability then transforms into ergodicity (extremality) of this unimodular probability measure. We will prove that in the above construction the
component of a fixed origin (as a unimodular decorated graph) is ergodic, instead of directly showing 
that the components of the percolation are indistinguishable. The advantage of addressing this property is that the usual definition of indistinguishability uses the automorphisms of the undelying graph
while
ergodicity of unimodular measures does not refer to that. We will have different graphs $G$ and $H$ on the same vertex set.
A subset of the vertex set induces a set of edges in $H$ and in $G$ as well. We will switch from viewing it as a subgraph of $H$, with some decoration from $G$ and the rest of $H$, to viewing it as a subgraph of $G$ with some decoration from $H$ and the rest of $G$. When doing so, the definition of indistinguishability that is independent of the underlying graph ($G$ or $H$) will be useful.
See Definitions \ref{extremal} and \ref{indist_graph}. 

Let us sketch the proof of our main Theorem \ref{question}. First apply Theorem \ref{mackovilag} to the amenable Cayley diagram BS(1,2).
The construction in Theorem \ref{mackovilag} is such that instead of directly partitioning $\R^d$ ($d\geq 3$) into tiles whose adjacency graph is $G$, we will map to each vertex of $G$ a tile in $\R^d$, together with a ``scenery'' (which is the rest of the tiling, from the viewpoint of this tile). This map will be a fiid from $G$. 
The usefulness of fiid constructions may be surprising in this setup, since the main question is not directly connected to locality or graph convergence. The reason that a big part of our construction needed to be fiid is because of our way of proving indistinguishability of the final tiles in the representation of $T_3$.
If we have a percolation on a Cayley graph or diagram $G$, and the percolation has infinite components ({\it pieces}) that are indistinguishable with their sceneries, and have any fiid decoration of these vertices, the resulting {\it decorated pieces} will also be indistinguishable. (For the definition of a Cayley diagram, see the paragraph before Lemma \ref{lemma_fiid} and see Definition \ref{extremal} for indistinguishability in the case of (decorated) graphs.)
This claim is proved in Lemma \ref{lemma_fiid}, which we call ``Decoration lemma''. Applying it to the  ($\Aut(G)$-invariant) partition into fibers of BS(1,2), which are trivially indistinguishable, we obtain that the
tiling that we assign to BS(1,2) as a decoration will produce indistinguishable unions of tiles over the fibers.
This is almost what we need, except for that with this viewpoint we constructed an $\Aut (G)$-invariant partition of $G$, with tiles of $\R^d$ assigned to the fibers as an $\Aut(G)$-invariant (fiid) decoration, 
%and such that the classes of the partition together with the decoration are indistinguishable,
but instead we want the tiling to be {\it invariant under the isometries of $\R^d$}. So the question is whether one can switch from the $\Aut (G)$-invariant object to an $\Isom (\R^d)$-invariant object. This will be guaranteed essentially by Lemma \ref{dual}, which we called ``Duality lemma''. Informally, the lemma says (in a somewhat more general context) the following. Suppose that there is a random drawing of some Cayley {\it diagram} $H$ on the vertex set $V(G)$, and the distribution of $G$ with this random drawing is unimodular. (Note that the drawn edges of $H$ are not required to be elements of $E(G)$.) Now, we can switch and view $H$ as the fixed graph, and $G$ as the random decoration on $H$. The lemma says that then this random copy of $G$ on $H$ will be $\Aut (H)$-invariant.
This duality is close to what we need. First, the $\Aut (G)$-invariant
decoration of $V(G)$ by tiles in $\R^d$ can be taken to be a unimodular random decoration by $\Z^d$ and some extra information that describes the tiles. The Duality lemma tells us that this extra information and $G$ on $\Z^d$ is $\Aut (\Z^d)$-invariant. This can then be turned to be $\Isom (\R^d)$-invariant by applying a uniform isometry from $\Isom (\R^d)/\Aut (\Z^d)$.
%Besides the present paper, the Duality lemma was later also used in \cite{BT}. 

We mention that the above argument remains true if we replace $\Isom (\R^d)$ by the group of translations at each occurance. Hence there exists a translation-invariant tiling representation for $T_3$ with tiles that are indistinguishable with regard to translation-invariant properties. (Note that the latter is stronger than being indistinguishable under $\Isom (\R^d)$-invariant properties.) Moreover, since the construction in our proof is also $\Isom (\R^d)$-invariant, it provides us with an $\Isom (\R^d)$-invariant tiling representation with tiles that are indistinguishable with regard to translations. 

While preparing the present manuscript, a conference version of a weaker result, using some different methods, was published in \cite{T2}. 

The paper is organized as follows. In the next section we provide the necessary definitions and the proof of the Decoration lemma. The Duality lemma (Lemma \ref{dual}) is presented in Section \ref{s.duality}, together with examples illustrating the need for the lemma and some questions inspired by the lemma. Theorem \ref{mackovilag} will follow from the more general Theorem \ref{Gdomains}, which claims that one-ended amenable unimodular random graphs have representations by invariant random tilings consisting of bounded tiles. (Note that here the tiles are not expected to be indistinguishable.)
This will be proved in
Section \ref{s.treetiling}. 
In that section, the special case when $G$ is a one-ended unimodular tree is proved first, and then it is extended to any one-ended amenable unimodular graph, using the fact that such graphs have one-ended fiid spanning trees, \cite{T3}.
For the proof that a one-ended unimodular tree $T$ can be represented by an invariant tiling, we will need a technical lemma, verified in
Section \ref{s.diadic}. This lemma will provide us with a fiid sequence of coarser and coarser partitions $\calp_n$ of $V(T)$ such that ``many of the'' parts in the partition are connected subgraphs of $T$ with $2^n$ points. With proper care, one can define on each such part a piece of the $d$-dimensional grid (as a fiid), and so that in the limit we get a copy of $\Z^d$ ($d\geq 3$) on $V(T)$. This grid can be extended to a tiling as desired, using the usual embedding of $\Z^d$ in $\R^d$. The importance of the connectedness of some pieces in $\calp_n$ will be coming from the fact that such pieces can be nicely represented by tiles within a cube (as shown on Figure \ref{farepi}). These nice representations will be defined in such a way that their limit is the representation of $T$ by a tiling of $\R^d$, as desired. Finally, Section \ref{s.indisttiles} presents the proof of the main theorem, whose sketch we have provided already. The Decoration lemma is applied therein, to ensure that the tiles that represent $T_3$ are indistinguishable. 
Here and in the proof of Theorem \ref{Gdomains}, one will need the Duality lemma to obtain $\Isom (\R^d)$-invariance of the construction from the fact that it is $\Aut (G)$-invariant.

\section{Definitions; the ``Decoration lemma''}

The next few definitions can be found in \cite{AL}, together with some equivalent characterizations.

\begin{definition}\label{unimodular}{\bf (Unimodular random graphs, MTP)}
Let $\G_*$ be the set of all finite-degree connected rooted (multi)graphs up to rooted isomorphism.
In notation, we do not distinguish between a rooted graph and the rooted isomorphism class that it represents.
Define a distance on $\G_*$ by $d ((G,o),(G',o')):=\min\{1/r\,:\, B_G(o,r)$ and $B_{G'}(o',r)$ are rooted isomorphic$\}$. Let $\G_{**}$  be the set of finite-degree connected rooted graphs with two distinguished vertices, and up to double-rooted isometries. Similarly to $\G_*$, a metric can be defined on $\G_{**}$.
Let $\mu$ be some probability distribution on $\G_*$. We say that $\mu$ defines a {\it unimodular random graph} if for every Borel function $f:\G_{**}\to\R^+$, the following is true
\begin{equation}
\E (\sum_y f(G,x,y))=\E (\sum_y f(G,y,x))
\label{eq:mtp}
\end{equation}
where $(G,x)$ is the random element of $\G_*$ of distribution $\mu$. The above equation is called the Mass Transport Principle (MTP).
\end{definition}

\begin{definition}\label{extremal}{\bf (Ergodic unimodular random graphs)}
Call a subset $\A\subset\G_*$ an {\it invariant property} if it is Borel measurable and closed under the change of root (that is, if $(G,o)\in\A$ and $x\in V(G)$, then $(G,x)\in\A$). 
A unimodular random graph $(\Gamma,o)$ is {\it extremal} or {\it ergodic} if for every such $\A$ either $(\Gamma,o) \in\A$ almost surely or $(\Gamma,o)\not\in\A$ almost surely.
\end{definition}

We first define {\it decorated graphs}. This will be equivalent to what is called {\it marked} graphs in \cite{AL}, but we adjust it to our setting. 

\begin{definition}\label{decorgraph}{\bf (The space of decorated graphs)}
Fix some complete separable metric space $X$. Consider rooted graphs $(G,o)$ together with some decoration of $(G,o)$, where a decoration means a set $V'$ of {\it extra vertices} and a set $E'$ of {\it extra edges or oriented edges on} $V(G)\cup V'$ added to $G$ so that
$(V\cup V', E\cup E')$ is connected, 
and some partial coloring $\chi$ of $V(G)\cup V'\cup E(G)\cup E'$ with elements of $X$. Denote such a graph by $(G,o;V',E',\chi)$, but we will often drop $\chi$ from the notation.
Two such decorated rooted graphs will be equivalent if there is a rooted isomorphism that maps them to each other, maps the extra vertices and extra edges into each other isomorphically, and preserving the coloring from $X$. We will extend the meaning of $\G_*$ as the space of decorated graphs, and as before, refer to elements of $\G_*$ through representatives of the equivalence classes.
Consider $(G,o)$ with decoration $V',E'$ as above and $(\bar G,\bar o)$ with decoration $\bar V',\bar E'$. Say that they are at distance at most $1/r$ if there is a rooted isomorphism that maps the $r$-neighborhood of $o$ in $(V(G)\cup V', E(G)\cup E')$ to the $r$-neighborhood of $\bar o$ in $(V(\bar G)\cup \bar V', E(\bar G)\cup \bar E')$, and in such a way that it isometrically maps the $r$-neighborhood of $o$ in $G$ to the $r$-neighborhood of $\bar o$ in $\bar G$, and in such a way that
the $X$-colors of the vertices and edges in this ball that are mapped to each other differ by at most $1/r$. (In particular, uncolored vertices and edges are mapped bijectively to uncolored vertices and edges.)
%Now, with $\Gd$ and the metric on it, we can define indistinguishability of the infinite components of some percolation with decoration just the same way we did for the non-decorated case. 
\end{definition}

The definition of ergodicity extends to {\it decorated} unimodular graphs without any change.
Let us mention that it is possible to have multiple decorations on a graph (or to further decorate a decorated graph), by extending the space $X$ in the natural way. When it is convenient to have several decorations, we list them all after the semicolon. One important example for us is when we have a connected component $\omega_o$ of the fixed vertex $o$ in some percolation $\omega$ on a graph $G$, and {\it we look at $(\omega_o,o)$ as a rooted graph decorated with $G$ and $\omega$}. Then we will further take some (factor of iid) decoration of $\omega_o$, which may also use information from the decoration $G,\omega$.

Let $\omega$ be some $\Aut (G)$-invariant percolation (random subgraph) of a Cayley graph (or diagram) $G$. Denote by $\omega_o$ the connected component of a fixed vertex $o$ in $\omega$. As just said, we may consider
$\omega$ and $G$ as a decoration of the random graph $(\omega_o,o)$. Referring to this as
 $(\omega_o, o; G, \omega)$, we will say that $(\omega_o,o)$ is decorated with {\it scenery}. The decorated graph $(\omega_o, o; G, \omega)$ is unimodular, see e.g. \cite{AL}.

\begin{definition}\label{indist_graph}{\bf (Indistinguishability; indistinguishability with scenery)}
Say that $\omega$ has {\it indistinguishable components} almost surely if $(\omega_o,o)$ is ergodic. Say that the components of $\omega$ are {\it indistinguishable with scenery} if $(\omega_o, o; G, \omega)$ is ergodic.
\end{definition}
The connection between ergodicity of an $(\omega_o,o)$ and indistinguishability of the components of $\omega$  in greater generality is investigated in \cite{M}.
A simple example of indistinguishable components that are not indistinguishable with scenery is shown in Remark \ref{sceneryindist}.

\begin{remark}\label{counterexample}{\bf (Tiling representation without indistinguishability)}
There exists a random isometry-invariant tiling of $\R^d$ that represents $T_3$ for $d\geq 2$ if we do not require the pieces to be indistinguishable. It has bounded partition classes of different scales (which implies right away that some tiles can be distinguished, using their sizes). See Figure \ref{fractal} for the intuitive picture of the tiling. For simplicity, we do the construction of a $T_5$-partition in $\R^2$. Consider 
a sequence of random vectors $(v_i)_{i=-\infty}^\infty$, such that $v_i\in [0,2^i]^2$ and $v_{i+1}-v_i\in 2^i\Z^2$. In other words, $v_0\in[0,1]^2$ is uniform, for $i<0$ define $v_i\equiv v_{i+1} $ mod $2^i$, and for $i>0 $ define $v_{i+1}=v_i+\eps_i$, where $\eps_i\in\{0,2^i\}^2$ is uniform.
Define
sets of the form $v_i+w+2^i (1/5,2/5)
$ with $w\in 2^i\Z^2$
and sets $v_i+w+2^i (3/5,4/5)
$ with $w\in 2^i\Z^2$. Let the collection of all such sets be ${\cal K}_i$. Finally, define ${\cal S}_i$ to be the collection of sets
$K\setminus \cup\{ \bar L \,:\, {L\in {\cal K}_{i-1}}\}$, as $K$ ranges over ${\cal K}_i$. Then $\cup_i {\cal S}_i$ is a translation-invariant $T_5$-partition.
By applying a random uniform element of the factor of $\Isom (\R^2)$ by its subgroup of translations, we can make it isometry-invariant.
\end{remark}

\begin{figure}[h]
\vspace{0.2in}
\begin{center}
\includegraphics[keepaspectratio,scale=0.7]{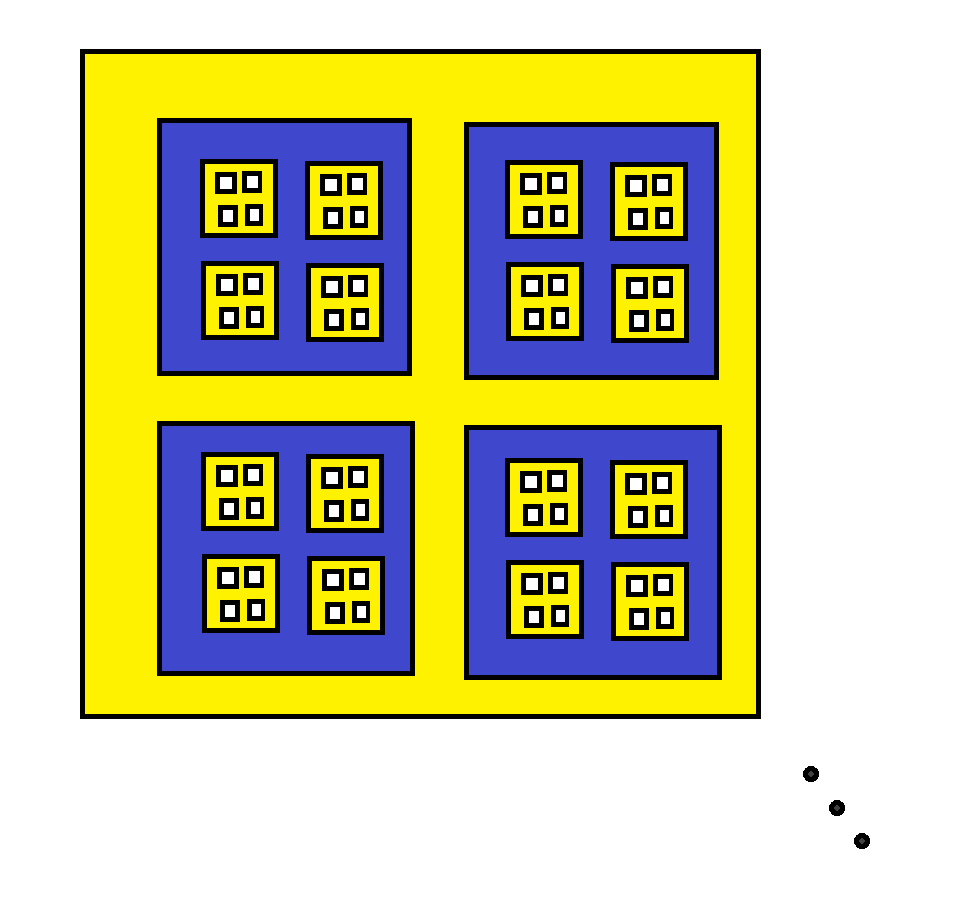}
\caption{Schematic representation of the 5-regular tree $T_5$ in $\R^d$ by a random tiling.
%, which can be made $\Isom (\R^d)$-invariant by applying a suitable random isometry to it
}\label{fractal}
\end{center}
\end{figure}

Let $G$ be an arbitrary graph or decorated graph. Put iid Lebesgue[0,1] random variables on its vertices. 
Consider some additional decoration on $V(G)$. Intuitively, we call it a {\it factor of iid} or {\it fiid} if
for any $\eps>0$ there is an $r$ such that one can tell the value of
the decoration of some $v\in V(G)$ from the labels in the $r$-neighborhood of $v$ in $G$ up to an arbitrarily small error.
That is, the decoration is determined locally, by the random labels. 

\begin{definition}\label{fiid}{\bf (Factor of iid)}
Let $X$ be some separable metric space.
%$G=(V(G),E(G))$ be a unimodular random graph, with iid Lebesgue[0,1] labels $\lambda\in [0,1]^{V(G)}$ on its vertices. Say that a measurable function $f:(\lambda, o)\mapsto x$ from ${\cal G}_*$ to $X$ is a {\it factor of iid} or {\it fiid}, if for any isomorphism $\gamma$ of $G$, $f(\gamma\lambda,\gamma x)=f(\lambda ,x)$.
A {\it factor map} is a Borel measurable function $f : {\cal G}_* \to X$.
Let G be a random graph, let 
$\lambda : V (G)\to [0, 1]$ be iid Lebesgue[0,1] {\it labels}
on its vertices, and let $G(\lambda)$ be the random labeled graph given by
the labels $\lambda$. The collection of random variables 
$\bigl(f((G(\lambda),v))\bigr)_{v\in V(G)}$
is
called a {\it factor of iid (fiid)} process if $f$ is a factor map.
\end{definition}

%See e.g. \cite{LN} for more details.  
Throughout the paper we reserve the term {\it label} for the iid Lebesgue labels in the above definition, as opposed to other {\it decorations} or {\it colorings} of the graph.

A uniform number $c$ from $[0,1]$ can be used to define two independent uniform numbers $c_1,c_2$ from $[0,1]$. Namely, if $c=.\xi_1\xi_2\xi_3\ldots$ is the binary expansion of $c$ (so $\xi_i\in\{0,1\}$), then $c_1:=.\xi_1\xi_3\xi_5\ldots$ and $c_2:=.\xi_2\xi_4\xi_6\ldots$ are iid uniform. We can similarly split $c_2$, and continue ad infinitum. 
Therefore, 
in fiid constructions, we will 
assume that we have infinitely many iid
random $[0,1]$-labels on each vertex at our disposal, all independent from the others. 
Our construction of fiid maps will be via some local algorithm, and for convenience we often do not formalize how to turn it into a factor of iid rule. But it is always possible to turn such local rules into fiid: any additional randomness that is needed locally, can be extracted from the iid labels on the vertices, and one could fix a rule for this extraction that is applied for every vertex. E.g., we will
make local choices from finite sets without specifying the particular rule that will be used (e.g., choose the vertex of the set whose label is the smallest).

Given some group $G$ and a finite set of generators $S$, a {\it Cayley diagram} is a graph on vertex set $G$, and an oriented edge from $x\in G$ to $y\in G$ if $xg=y$, $g\in S$, in which case we color this edge by $g$. 
A Cayley graph is constructed from a Cayley diagram if we forget about the orientations and the colors of the edges.
We say that two Cayley diagrams are isomorphic if there is a graph isomorphism between them that also preserves the orientations and colors of the edges. By a slight abuse of terminology, but without ambiguity, we will use notation $\Z^d$ both for the group, the Cayley graph, and the Cayley diagram with respect to the standard generators. It will always be clear from the context, which one is understood.

For completeness, we provide the definition of amenability of unimodular random graphs. However, we will not need it, only a characterizing property of one-ended amenable unimodular random graphs: namely, that they contain a jointly unimodular one-ended random spanning tree. The notion of amenability was extended from transitive (unimodular) graphs to unimodular random graphs by Aldous and Lyons in \cite{AL}, and they gave several equivalents; see also Section 2 of \cite{T3} for the definitions. For a graph $H$ and vertex $x\in V(H)$, denote by $\deg (x,H)$ the degree of $x$ in $H$. 

\begin{definition}\label{amendef}{\bf (Amenability)}
A unimodular random graph $(G,o)$ is {\it amenable} if for any $\eps>0$ there is an $\omega\subset G$ such that $(G,o;\omega)$ is unimodular, every component of $\omega$ is finite, and $\deg (o,G)-\deg(o,\omega)<\eps$.
\end{definition}

\begin{lemma}{\bf (Decoration lemma)}\label{lemma_fiid}
Let $G$ be a (locally finite) Cayley graph or Cayley diagram, and $\omega$ be an $\Aut(G)$-invariant percolation. Let $\delta$ be some factor of iid decoration on $G,\omega$, where the iid labels $\lambda:V(G)\to [0,1]$ used for the factor are independent from the percolation. Suppose that all the components (pieces) of $\omega $ are infinite and indistinguishable with scenery. Then they are also indistinguishable with scenery in the full decoration of $\omega$ with the fiid.
\end{lemma}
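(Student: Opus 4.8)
The plan is to translate the hypothesis ``all components of $\omega$ are indistinguishable'' into the ergodicity statement ``$(\Gamma,o,\omega)$ is ergodic'' (this is exactly Definition \ref{indist_graph}), and then to deduce from it the stronger ergodicity statement ``$(\Gamma,o,\omega,\delta)$ is ergodic'', which again by Definition \ref{indist_graph} (with the decoration space enlarged to include $\delta$) is precisely the claim that the pieces are indistinguishable in the full decoration. So the whole content is: \emph{adding a factor of iid decoration to an ergodic unimodular decorated graph keeps it ergodic.} This is the unimodular/measured-equivalence analogue of the fact that an iid extension of an ergodic measure-preserving system is ergodic, and I would prove it in that spirit.

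First I would set up the right probability space. Starting from the ergodic unimodular decorated random graph $(\Gamma,o,\omega)$, form the extension $(\Gamma,o,\omega,\lambda)$ where $\lambda$ is an iid Lebesgue$[0,1]$ field on $V(\Gamma)$, independent of everything. The key sub-step is that \emph{this iid extension is still ergodic as a unimodular decorated graph}: this is standard (it is the ``unimodular iid extension'' — see \cite{AL} for the fact that iid marks preserve unimodularity, and ergodicity follows because any invariant event would, after integrating out $\lambda$, give an invariant event of $(\Gamma,o,\omega)$, which is trivial; conversely a zero-one law for the iid field handles the dependence on $\lambda$ itself, just as in the classical Kolmogorov $0$–$1$ argument combined with the ergodic decomposition). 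Once $(\Gamma,o,\omega,\lambda)$ is ergodic, the decoration $\delta$ is, by Definition \ref{fiid}, a measurable equivariant function of $(\Gamma,\omega,\lambda)$ and the root, so $(\Gamma,o,\omega,\delta,\lambda)$ — and hence its reduct $(\Gamma,o,\omega,\delta)$ — is a \emph{factor} (in the measure-theoretic sense) of the ergodic system $(\Gamma,o,\omega,\lambda)$. A factor of an ergodic system is ergodic. That gives the conclusion.

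A few technical points need care in the write-up. One must check that $(\Gamma, o, \omega,\lambda)$ and $(\Gamma,o,\omega,\delta)$ are genuinely unimodular random decorated graphs in the sense of Definition \ref{decorgraph}, i.e. that the decoration spaces are complete separable metric and that the Mass Transport Principle \eqref{eq:mtp} still holds; for iid marks and for measurable equivariant factors of them this is routine and is recorded in \cite{AL}, so I would just cite it. One must also be careful that ``indistinguishability of components of $\omega$'' as stated in the lemma is the version \emph{with scenery} (ergodicity of $(\Gamma,o,\omega)$, not merely of $(\Gamma,o)$) — the lemma's phrasing ``all the components of $\omega$ are infinite and indistinguishable'' should be read through Definition \ref{indist_graph}, and I would say so explicitly. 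The passage from an $\Aut(G)$-invariant percolation $\omega$ on the Cayley graph $G$ to the unimodular decorated graph $(\Gamma,o,\omega)$ is the standard correspondence recalled in the paragraph before Definition \ref{indist_graph}.

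The main obstacle is the middle step: proving that the iid extension $(\Gamma,o,\omega,\lambda)$ of the ergodic unimodular decorated graph $(\Gamma,o,\omega)$ is itself ergodic. Unlike the classical group-action setting, here ``ergodic'' means extremality in the convex set of unimodular measures with the given reduct, so one cannot literally invoke a measure-preserving transformation; instead one argues via the ergodic decomposition of unimodular measures together with a tail/$0$–$1$ law for the iid field (an invariant event, being determined up to measure zero by finitely much of the graph and finitely many coordinates of $\lambda$ after approximation, is independent of itself). I would present this as a short lemma and then the rest of the proof is the soft ``factor of ergodic is ergodic'' argument above.
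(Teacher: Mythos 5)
Your first reduction is exactly the paper's: apply $\delta^{-1}$ to reduce to showing that decorating the ergodic unimodular decorated graph $(\Gamma,o,\omega)$ (equivalently, $(G,\omega)$ with indistinguishable components with scenery, read through Definition \ref{indist_graph}) by the iid field $\lambda$ preserves ergodicity, after which ``a factor of an ergodic system is ergodic'' finishes. Where you diverge is in the proof of that middle step, and that is where your sketch has a real thin spot. The paper proves it concretely: it runs the delayed simple random walk on the component of $o$, applies Birkhoff's theorem to cylinder events determined by $(\omega,\lambda)$ on an $R$-ball, computes the mean of the ergodic averages using stationarity and the ergodicity of the $\omega$-environment alone, and kills the variance using that $\lambda$-restrictions to balls at distance $>2R$ are independent and that the walk spends at most $O(\sqrt n)$ expected time at any fixed vertex in $n$ steps; constancy of the Birkhoff limit then forces triviality of the invariant $\sigma$-field.

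Your proposed soft substitute --- ``a zero-one law for the iid field handles the dependence on $\lambda$'' --- does not work as stated: conditionally on the base $(\Gamma,o,\omega)$, a re-rooting-invariant event $A$ is just some event in $[0,1]^{V(\Gamma)}$, and it is neither a tail event nor an exchangeable event (the base may have trivial automorphism group), so no off-the-shelf $0$--$1$ law applies. The phrase ``independent of itself'' can be made correct, but only by supplying the missing ingredient: approximate $A$ within $\eps$ by an event $A_1$ depending on $\lambda$ restricted to a finite window around the root; use that the components are \emph{infinite} (this is where that hypothesis enters) to re-root, measurably and equivariantly, to a vertex whose window is disjoint from the original one; note that $A$ equals its re-rooted copy by invariance, that the two approximating events are conditionally independent given the base, and that $\P(A\mid \Gamma,o,\omega)$ is an invariant function of the base and hence a.s.\ a constant $p$ by the assumed ergodicity; conclude $p=\P(A\cap A)\approx p^2$ up to $O(\eps)$, so $p\in\{0,1\}$. (The equivariant choice of a far-away re-rooting also needs a word, e.g.\ averaging over all vertices at a given large distance, since a canonical single choice need not exist.) With that paragraph added your argument is a legitimate alternative to the paper's random-walk computation; without it, the key step is asserted rather than proved.
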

%It follows that for any fiid decoration of the fibers of B(1,2) (which are trivially indistinguishable subgraphs), the decorated fibers are indistinguishable.

\begin{remark}\label{sceneryindist}
The lemma is not true if we only assume indistinguishability of the components: the different sceneries can be used to define different decorations, which are trivially fiid. For example, consider $G=\Z^2$, and let $i\in \{1,\ldots,5\}$ be uniform random, $t\in\{0,1\}$ independent uniform random, and $\omega$ consist of all horizontal lines of vertical coordinate $i$, $i+1$ or $i+3$ mod 5 if $t=0$, and
all vertical lines of horizontal coordinate $i$, $i+1$ or $i+3$ mod 5 if $t=1$. The components are indistinguishable (but they are not indistinguishable with scenery). Color with red the components that are at distance 1 from some other component, and color all other components with green. This decoration is a trivial fiid (which does not depend on the labels), and results in distinguishable decorated components.
\end{remark}

%The lemma is true more generally: ergodicity of a unimodular random (decorated) graph is preserved by additional fiid decorations. To reduce the amount of technicalities needed, we decided to include only the simpler version that we need here. 
The proof is far from surprising, only the necessary notation makes it cumbersome.

\def\Bomega{{B^{\Omega}}}
\def\Blambda{{B^{\Omega,\Lambda}}}
\def\calf{{\cal F}}
\def\calflambda{{\cal F}^{\Lambda}}
\def\cali{{\cal I}}
\def\calilambda{{\cal I}^{\Lambda}}
\def\Alambda{{A^{\Lambda}}}

\begin{proof}
Fix a vertex $o$ and denote by $\omega_o$ its component in $\omega$.
Proving by contradiction, suppose that the claim is false.
By Definition \label{extremal} this means that there is an invariant property that the unimodular decorated random graph (with scenery) $(\omega_o,o; G,\omega,(\delta (\lambda,v))_{v\in V(G)})$ satisfies with probability strictly between 0 and 1.
Applying $\delta^{-1}$, we get an
invariant property of probability strictly between 0 and 1. So, to get a contradiction, it is enough to prove that 
%$(G,\omega)$ decorated with the iid labels $\lambda$ on $V(G)$ has indistinguishable decorated $\omega$-components, i.e., that
$(\omega_o,o; G,\omega,(\lambda,v)_{v\in V(G)})$ is ergodic. 
%the $\omega_o$ is ergodic as a unimodular graph with decoration $G$, $\omega$ and $\lambda$.

For a vertex $x\in V(G)$, let $B(x,R)$ be the rooted ball of radius $R$ around $x$ in $G$. We think about $B(x,R)$ as a graph, but keep track of the root.
From now on, let $\calf$ be the $\sigma$-algebra for the percolation $\omega$ on rooted graph $(G,o)$, and $\calflambda$ be the product $\sigma$-algebra of $\calf$ and that of the $\lambda$-labellings.
%Fix a vertex $o$.
Starting from $o$, run delayed simple random walk on $\omega$, which is defined as follows. If we are in a vertex $x$, choose uniformly a $G$-neighbor $y$ of $x$. If $y$ is a neighbor of $x$ in $\omega$, then move to $y$, otherwise stay in $x$. 
Let $X_n$ be the $n$'th step of this walk; so $X_0=o$. One can view this random walk as a process on rooted graphs decorated with $\omega$ (with root in the vertex where the walker is), and also as a process on rooted graphs decorated with $\omega$ and $\lambda$. 
Both processes are stationary with respect to the delayed random walk (Theorem 4.1 in \cite{AL}). 

Let $\cali$ (respectively, $\cali^\Lambda$) be the invariant $\sigma$-field of $\calf$ (respectively, $\calf^\Lambda$) with respect to the transition operator of the delayed simple random walk.
%, with probability space given by the percolation $\omega$. 
Ergodicity of $(\omega_o,o; G,\omega)$ is equivalent to saying that for every
$A\in \calf$, $\E ({\bf 1}(A)|\cali)=\P(A)$ almost everywhere. Our goal is to show that for every $\Alambda\in \calflambda$, $\E ({\bf 1}(\Alambda)|\calilambda)=\P(\Alambda)$ almost everywhere. It is enough to show this for every cylinder event $\Alambda$. So let us assume that $\Alambda$ is determined by the restriction of $\omega$ and $\lambda$ to $B(o,R)$, with some $R>0$. Denote by $\Bomega (x)$ the pair $(B(x,R), \omega|_{B(x,R)})$.
Let $\Blambda (x)$ be $\Bomega(x)$ together with the labels from $\lambda$, i.e., $(B(x,R), \omega|_{B(x,R)}, \lambda|_{B(x,R)})=(\Bomega (x), \lambda|_{B(x,R)})$. Whether an $(\omega,\lambda)$ configuration on $(G,o)$ is in $\Alambda$ is determined by the configuration in $B(o,R)$, that is, by $\Blambda (o)$. Therefore, we can say $\Blambda (o)\in\Alambda$ without ambiguity (but by a slight abuse of terminology).
%, as an equivalent to $(\omega,\lambda)\in\Alambda)$ on $(G,o)$.

By Birkhoff's ergodic theorem we have
\begin{equation}
\frac{1}{n}\sum_{i=1}^n {\bf 1}({\Blambda (X_i)\in \Alambda})\to \E \big({\bf 1}({\Blambda (X_i)\in \Alambda})\big|\calilambda\big)
%=\P (B(o,R)\sym \H_k),
\label{birkhoff}
\end{equation}
almost surely, and our goal is to prove that the right hand side is $\P(\Alambda)$ almost surely. In order to do that, first we will show that the expectation of the left hand side converges to $\P(\Alambda)$, and then prove that the second moment tends to 0.
We can assume that $\Alambda$ consists of elements where $\Bomega$ is equal to some fixed configuration $\hat\Bomega$. Then we obtain the result for more general $\Alambda$ by taking disjoint unions. Let $\Lambda_0$ be the set of $\lambda$-configurations $\lambda|_{B(o,R)}$ on the $R$-ball of $G$, such that $(\hat\Bomega (o), \lambda|_{B(o,R)})=\Blambda (o)$ is in $\Alambda$. 
%We know that $\Alambda=\{(\hat\Bomega, \lambda):\, \lambda\in\Lambda_0\}$.
By the independence of $\omega$ and $\lambda$, we have 
\begin{equation}
\P(\Alambda)=\P(\hat\Bomega)\P(\Lambda_0)
\label{egyszeru}
\end{equation}
For simplicity, introduce $\xi_i={\bf 1}({\Blambda (X_i)\in \Alambda})$.

Now, $\E(\sum_{i=1}^n \xi_i)=\E \bigl(\sum_{i=1}^n {\bf 1}({\Bomega(X_i)\cong\hat\Bomega})
\P(\lambda|_{B(X_i,R)}\in \Lambda_0)\bigr)$. (Here probabilies and expectations are understood jointly with respect to the unimodular measure, the labelling and the random walk.)
By the stationarity of the delayed random walk, $\P(\lambda|_{B(X_i,R)}\in \Lambda_0)=\P(\lambda|_{B(X_j,R)}\in \Lambda_0)=\P(\Lambda_0)$ for every $i,j$.
Denote $N(n):=\bigl|\{i\in\{1,\ldots, n\}: \Bomega (X_i)\cong\hat\Bomega
\}\bigr|$. 
Using the fact that the $\lambda$-labels and the random walk are independent, we obtain
\begin{equation}
\E(\sum_{i=1}^n \xi_i)=\E \bigl(\sum_{i=1}^n {\bf 1}({\Bomega (X_i)\cong\hat\Bomega})
\P(\lambda|_{B(X_i,R)}\in \Lambda_0)\bigr)=\E( N(n))\P(\Lambda_0).
\label{subidubi}
\end{equation}
Apply the ergodic theorem to our random walk with only the $\omega$ environment and forgetting about the $\lambda$. By assumption, the $\omega$-environment is ergodic, therefore $\frac{N(n)}{n}=\frac{1}{n}\sum_{i=1}^n {\bf 1}({\Bomega (X_i)\cong\hat\Bomega})\to \E\big({\bf 1}({\Bomega (o)\cong\hat\Bomega})\big|\cali\big)=\P(\hat \Bomega)$ almost surely. Thus \eqref{subidubi} can be rewritten as
\begin{equation}
\frac{1}{n}\E(\sum_{i=1}^n \xi_i)
\to \P(\hat\Bomega)\P(\Lambda_0).
%=\P (B(o,R)\sym \H_k),
\label{subidubi2}
\end{equation}
If $\dist_G (x,y)>2R$, then $\lambda|_{B(x,R)}$
is independent from $\lambda|_{B(y,R)}$, and hence 
${\bf 1}({\Blambda (x)\in \Alambda})$ and ${\bf 1}({\Blambda (y)\in \Alambda})$ are independent.
Then there is a constant $c$ such that 
for any $x\in V(G)$, the expectation of $|\{i\in\{1,\ldots,n\}, X_i=x
\}|$ is less than 
$c\sqrt{n}$ (by a standard argument such as the one after Theorem 8.2 in \cite{P}, which works for any infinite graph).
All this put together gives a second moment bound
$\Var (\frac{1}{n}\sum_{i=1}^n \xi_i)
\leq C n^{-1/2}$.

Let us summarize what we have seen about $\frac{1}{n}\sum_{i=1}^n \xi_i$. First, it is almost surely convergent, by \eqref{birkhoff}. Secondly, the second moment of this sequence tends to 0, so the limit random variable is almost surely a constant. Then, by \eqref{subidubi2} and \eqref{egyszeru} this constant is $\P(\Alambda)$. We conclude that
$$\lim _{n\to\infty}\frac{1}{n}\sum_{i=1}^n \xi_i=\P(\Alambda).
$$
Comparing this to \eqref{birkhoff}, we obtain that the labelled graph has a trivial invariant $\sigma$-field $\calilambda$. This is what we wanted to prove.
\end{proof}

The proof of the previous lemma inspired the following question, which seems to be open, somewhat surprisingly.

\begin{question}
Let $(G,o)$ be an ergodic unimodular random graph of bounded degrees, and $(X_n)$ be lazy random walk started from $X_0=o$, with laziness set up so that $(X_n)$ is stationary for every $n$ and such that $\P(X_0=X_1)>0$. Does the conditional distribution of $(G,X_t)$ given $(G,X_0)$ converge to the initial unimodular measure almost surely?
\end{question}
We mention that there is no straightforward way to apply the (Birkhoff) ergodic theorem for the averages given by the lazy random walk, since there is a simple construction of a measure preserving system and an $L^1$ function for which these averages do not converge, as noted to us by G\'abor Pete.

\section{Duality}\label{s.duality}
To motivate the present section, we point at the fact that there exists 
an $\Isom (\R^d)$-invariant embedded copy of $T_3$ in $\R^d$ that is not $\Aut (T_3)$-invariant (as a map from $T_3$ to this copy). See the next proposition (stated for $T_5$ for convenience) for the proof. This observation highlights that there are two possible interpretations of having an ``invariant'' copy of one space in the other,
and these two may not always hold at the same time.
Our constructions for Theorems \ref{question} and \ref{mackovilag} will, however, work in both senses of invariance, thanks to the correspondance established in this section.

\begin{proposition}\label{szezonesfazon}{\bf ($\Isom (\R^d)$-invariance vs $\Aut(T)$-invariance)}
Let $d\geq 2$.
There exists an $\Isom (\R^d)$-invariant random embedded copy of the 5-regular tree $T_5$ in $\R^d$ that does not arise as the image set of any $\Aut (T_5)$-invariant random map from $T_5$ to $\R^d$.
\end{proposition}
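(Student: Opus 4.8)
The plan is to build the $\Isom(\R^d)$-invariant copy of $T_5$ from the self-similar ``fractal'' construction already described in Remark \ref{counterexample} (with its picture in Figure \ref{fractal}), and then argue by a mass-transport / expected-degree obstruction that no $\Aut(T_5)$-invariant random map from $T_5$ can have this same image distribution. First I would recall the construction: take the nested random shifts $(v_i)_{i=-\infty}^{\infty}$ with $v_i\in[0,2^i]^2$ (embedded into $\R^d$ via the first two coordinates, say, or done directly in the appropriate dimension), place at scale $i$ a collection $\calK_i$ of blocks of diameter of order $2^i$ arranged $2^i\Z^d$-periodically inside the scale-$(i{+}1)$ structure, and let the pieces $\cals_i$ be the scale-$i$ blocks with the scale-$(i{-}1)$ blocks removed. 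Picking one representative point inside each piece gives an embedded vertex set; the adjacency (a piece of $\cals_i$ is adjacent to the unique piece of $\cals_{i+1}$ that ``contains'' it, and to the pieces of $\cals_{i-1}$ sitting inside it) realizes a regular tree, and by choosing the combinatorial parameters so that each scale-$(i{+}1)$ block contains exactly $4$ scale-$i$ blocks one gets $T_5$ (degree $1+4$). Applying a uniform random element of $\Isom(\R^d)/(\text{translations})$ — or, more simply, noting the translation-invariance already present and then randomizing the remaining rotational/reflectional part — makes the embedded copy $\Isom(\R^d)$-invariant. This gives the existence half.

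For the non-existence half, the key point is the folklore fact already invoked in the paper (Proposition \ref{nincsfa}): if $T_5$ is embedded in $\R^d$ by an $\Aut(T_5)$-invariant random map, then the expected number of vertices landing in a fixed unit cube is finite. I would derive this via the Mass Transport Principle: fix a unit cube $Q$ and transport, from each vertex $v$ to each vertex $w$, the quantity $\mathbf 1[\,\text{the image of }v\in Q\,]$ times an appropriate indicator relating $w$ to $v$; $\Aut(T_5)$-invariance of the map lets one run the standard argument showing that a vertex sends out total mass $1$ but, in a nonamenable tree, cannot receive bounded expected mass unless the density (expected number of points per unit volume) is zero — contradicting that the images must cover space in a translation-equivariant-in-distribution sense, or more directly contradicting that the expected point count in a large ball grows only linearly in volume for an invariant point process while the tree's combinatorial balls grow exponentially. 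The cleanest route: an $\Aut(T_5)$-invariant random embedding induces, by pushing forward, an $\Isom(\R^d)$-invariant (after the usual randomization) point process whose ``Palm'' combinatorial structure is $T_5$; unimodularity of $\R^d$ forces the expected degree seen from a typical point to be computable by MTP, but the expected number of points within graph-distance $n$ would have to be finite with the exponential count $\le$ something polynomial in the spatial radius, and one shows the embedding cannot be at bounded density — whereas the fractal construction above has \emph{unbounded} density, with pieces (and their representative points) of arbitrarily small scale accumulating, so the expected number of image points in $Q$ is infinite. Hence the fractal copy's distribution cannot be realized by any $\Aut(T_5)$-invariant map.

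So the structure is: (1) describe the fractal $T_5$-embedding and its $\Isom(\R^d)$-invariance; (2) compute its spatial density and show the expected number of vertices in a unit cube is infinite (this is essentially immediate from the scale-$i$ pieces contributing a density $\sim c\cdot 2^{-id}\cdot 2^{id}$-type constant at \emph{every} negative scale $i$, summing to $\infty$); (3) invoke/prove the folklore statement that any $\Aut(T_5)$-invariant embedded $T_5$ has finite expected vertex count per cube; (4) conclude the two cannot coincide in distribution. The main obstacle is step (3) — making precise, via the Mass Transport Principle, the passage between an $\Aut(T_5)$-invariant map out of the tree and an invariant point process in $\R^d$, and showing the nonamenability of $T_5$ forces finite (indeed bounded-density) expected occupation of a cube; this is exactly the content the paper attributes to Proposition \ref{nincsfa}, so I would either cite it directly or reproduce its short mass-transport argument. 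Steps (1), (2), (4) are routine bookkeeping with the self-similar construction.
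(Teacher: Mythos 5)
Your existence half matches the paper: take the tiling of Remark \ref{counterexample}, put one uniform point in each tile, and join points in adjacent tiles; this is exactly what the paper does. The problem is your non-existence half. The ``folklore fact'' you invoke in step (3) is backwards: Proposition \ref{nincsfa} asserts that \emph{every} $\Isom(\R^d)$-invariant copy of a regular tree has \emph{infinite} intensity, and indeed an elementary transitivity argument shows that for any $\Aut(T_5)$-invariant random map $\phi$ the quantity $\P(\phi(v)\in Q)$ is independent of $v$, so $\E\,|\{v:\phi(v)\in Q\}|$ is either $0$ or $+\infty$ --- it can never be finite and positive. Consequently the infinite intensity of the fractal copy (your step (2), which is correct and is noted in the paper) is not an obstruction to its arising from an $\Aut(T_5)$-invariant map, and steps (3)--(4) collapse: there is no dichotomy between ``infinite intensity'' and ``$\Aut(T_5)$-invariant image.''

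The paper's actual obstruction is structural, not densitometric. In the fractal construction every tile has a unique neighbor whose tile separates it from infinity, and this ``parent'' rule is determined by the geometry of the embedded copy alone. If the copy were the image of an $\Aut(T_5)$-invariant random map, the induced parent function on $V(T_5)$ would be an $\Aut(T_5)$-equivariant choice under which every vertex has exactly one parent but is the parent of exactly four vertices; sending mass $1$ from each vertex to its parent then violates the Mass Transport Principle on $T_5$ (expected mass out $=1$, expected mass in $=4$). This is the standard ``no invariant end-orientation of a nonamenable regular tree'' argument, and it is the ingredient your proposal is missing. To repair your proof you would need to replace steps (3)--(4) with this parent/MTP argument (or some other genuinely $\Aut(T_5)$-equivariant structure extracted from the fractal geometry).
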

We mention that the random copy of $T_5$ in the claim is such that the vertices form a point process of infinite intensity. If one requires finite intensity, there is no example as in Proposition \ref{szezonesfazon}; see Proposition \ref{nincsfa}.

\begin{proof}
Choose a uniform point in each of the tiles in the construction of Remark \ref{counterexample}, and connect two by a straight line if and only if their tiles are adjacent. One almost surely gets an $\Isom(\R^d)$-invariant embedding of $T_5$ into $\R^d$ if $d\geq 3$. (If $d=2$, one can define some broken line segment between the two points in adjacent tiles so that no two such segments intersect. We leave the details to the interested reader.) On the other hand, suppose that $\phi$ were an $\Aut (T_5)$-invariant random map from $T_5$ to $\R^d$ whose image set were the above-constructed random embedded copy of $T_5$. Then for every vertex $v$ one could uniquely define a ``parent'' $w$ as the neighbor whose tile separates the tile of $v$ from infinity. The rule to define the parent is $\Aut (T_5)$-invariant. But then every vertex would be the parent of 4 other vertices, and would have a single parent, giving a MTP contradiction.
\end{proof}

A group of automorphisms is {\it regular} if it is transitive and the stabilizer of any point is trivial. The automorphism group of a Cayley diagram is always regular. Let $(\bar G; \bar \chi)$ be a fixed decorated graph whose automorphism group is regular, and let $(G,o;\chi, \chi')$ be some ergodic unimodular random graph (with decorations $\chi$ and $\chi'$). Fix a vertex $\bar o$ of $\bar G$.
Suppose that the decorated graph $(G;\chi)$ is almost surely isomorphic to $(\bar G; \bar \chi)$. By regularity, there is a unique isomorphism $\rho$ that maps $(G;\chi)$
to $(\bar G; \bar \chi)$ and such that $\rho (o)=\bar o$. 
Call it the {\it derooting map}. 
%By a slight abuse of notation, we denote by $\rho((G;\chi))$ the image set of the bijection from $(G;\chi)$ (which is just $(\bar G; \bar \chi)$), and similarly, by $\rho((G;\chi,\chi'))$ the image set of $(G;\chi,\chi')$, which is $(\bar G; \bar \chi)$ together with a random decoration given by $\rho$ mapping $\chi'$.

\begin{lemma}\label{unimod2invar}{\bf (From unimodular to invariant)}
Let $(G,o;\chi, \chi')$ be an ergodic unimodular random graph (with decorations $\chi$ and $\chi'$),
and suppose that $(G;\chi)$ is almost surely isomorphic to some fixed decorated graph $(\bar G; \bar \chi)$ whose automorphism group is regular. Then the derooting map takes $(G,o;\chi, \chi')$ to an $\Aut (\bar G)$-invariant random decorated graph. 
Less formally: if $(G,o;\chi, \chi')$ is unimodular and $\Aut((G; \chi))$ is regular then $(G;\chi, \chi')$ is $\Aut((G; \chi))$-invariant.
\end{lemma}

\begin{proof}
We will prove using the less formal language, referring directly to $(G;\chi)$, without explicit involvement of $(\bar G;\bar \chi)$.
Consider an arbitrary automorphism $\gamma$ of 
%$(\bar G;\bar\chi)=\rho((G;\chi))$. 
$(G;\chi)$.
Pick an arbitrary event $A$ determined by the random decorated graph $(G;\chi, \chi')$.
Define the following mass transport. Let every vertex $x$ of $G$ send mass 1 to vertex $\gamma x$ if the event $A$ holds. Then the expected mass sent out is $\P(A)$. By the unimodularity of $(G,o;\chi, \chi')$ and the MTP, this is the same as the expected mass received, which is $\P (\gamma^{-1}A)$. Since this is true for any $A$ and automorphism $\gamma$, we obtain the invariance claimed.
\end{proof}

\begin{lemma}\label{dual}{\bf (Duality lemma)}
%Let $(G,o)$ and $(H,o')$ be unimodular random graphs. Suppose that there is a unimodular decoration of $G$ by $H$, meaning that there is 
Consider some unimodular random decorated graph 
\\$((V,E),o; F,\chi)$, where $(V,E)$ and $(V,F)$ are connected graphs,
and $\chi:V\cup E\cup F\to {\cal X}$ is some arbitrary further decoration, given as a map to some metric space ${\cal X}$. Then the decorated graph $((V,F),o;E,\chi)$ is unimodular. If, furthermore, $((V,F);\chi|_{F})$ is a deterministic graph with a regular automorphism group, then $((V,F);E,\chi)$ is invariant under the automorphisms of $((V,F);\chi|_{F})$.
\end{lemma}
%Note that having a regular automorphism group is a natural assumption to remain well-defined: if the stabilizer of $o$ were not trivial, it would be unclear how to apply an automorphism to $((V,F),o;E,\chi)\in \Gstar$, which is only defined up to rooted isomorphisms.

\begin{proof}
%Let us hide $\chi$ in our notation, for the sake of simplicity.
%Denote by $\mu$ the probability measure that defines the unimodular random decorated graph $((V,E),o; F)$. Denote by $\iota:((V,E),o; F)\mapsto ((V,F),o;E)$ the map defined on the support of $\mu$ that switches the roles of the edge set $E$ and decoration $F$. Note that $\iota$ is injective: if $((V,F),o;E)$ and $((V,F'),o;E')$ are rooted isomorphic, then the same bijection from $V$ to itself shows that $((V,E),o;F)$ and $((V,E'),o;F')$ are also rooted isomorphic.
%We have to show that the push-forward $\mu'$ of $\mu$ by $\iota$ is also unimodular, that is, it satisfies Definition \ref{unimodular}. Consider an arbitrary Borel function $g:\G_{**}\to\R^+$. On every decorated rooted graph $(G,o)$ in the support of $\mu$, define $f(G,o,x):=g(\iota^{-1}((G,o)),x)$ from $\G_{**} $ to $\R^+$. Injectivity of $\iota$ implies that $f$ is well-defined. On the other hand, $f$ is Borel, because $\iota$ is a homeomorphism of $\G_{*} $. Thus \eqref{eq:mtp} holds for $f$ and $\mu$. But the respective sides remain the same if we replace $f$ by $g$ and $\mu$ by $\mu'$. We conclude that the MTP holds for $\mu'$.
The first half of the claim is trivial: both the assumption and the conclusion are the same as the joint unimodularity of $((V, E\cup F),o;F,\chi)$ (or, equivalently, $((V, E\cup F),o;E,\chi)$), by definition.

The second part of the claim follows directly from Lemma \ref{unimod2invar}.
\end{proof}

The next proposition is an application of the Duality lemma, and it complements Proposition \ref{szezonesfazon}. It will be needed later, and we have not found it in the literature, so we include an outline of the proof. One may think that a Burton-Keane type of argument would show the claim right away, but in order to do so, one would have to move between the $\Isom (\R^d)$-invariant and $\Aut (T_3)$-invariant worlds, which is, as Proposition \ref{szezonesfazon} indicates, not a triviality.
By a ``random copy'' of $T_3$ in $\R^d$ we mean that there is some ($\Isom (\R^d)$-invariant) point process $V$ in $\R^d$ and a graph defined on $V$ that is isomorphic to $T_3$. The {\it intensity} of an invariant point process is the expected number of points in a unit cube.

\begin{proposition}\label{nincsfa}
Let $T$ be an $\Isom (\R^d)$-invariant random copy of $T_3$ in $\R^d$, $d\geq 1$. Then $V(T)$ has infinite intensity in $\R^d$. 
\end{proposition}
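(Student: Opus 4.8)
The plan is to argue by contradiction: assume $V(T)$ has finite intensity, i.e. the expected number of vertices of $T$ in a unit cube is some finite $\rho<\infty$. The key idea is to use the tree structure of $T_3$ to build a mass transport on $\R^d$ that is incompatible with finite intensity. First I would fix an invariant way of orienting (almost all of) the edges of $T$. Since $T_3$ is one-ended only as a quotient but actually has continuum many ends, the natural device is an $\Isom(\R^d)$-invariant (equivalently, using the correspondence of Section \ref{s.duality}, $\Aut(T_3)$-invariant, after passing to the map picture) choice of an ``end'' or of a parent function; but a cleaner route avoids choosing an end. Instead, take the iid-labelling / randomness available and, for each edge $e=\{u,v\}$ of $T$, remove $e$ mentally: $T\setminus e$ has two infinite components, and I direct $e$ from $u$ to $v$ if $v$'s side is ``heavier'' in a suitable invariant sense — but both sides are infinite, so this does not work directly either.

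The robust standard argument is the following mass-transport/amenability obstruction. For a finite subtree, the number of boundary edges exceeds the number of internal vertices; more precisely, in $T_3$ any finite connected set $S$ satisfies $|\partial_E S|\geq |S|+2$. Now suppose $V(T)$ had finite intensity $\rho$. Consider the mass transport on $\R^d$ where each point $x\in V(T)$ sends mass $1/3$ along each of its three tree-edges to the other endpoint; then each vertex sends total mass $1$ and receives total mass $1$, which is consistent and gives no contradiction by itself. The contradiction must come from geometry: an $\Isom(\R^d)$-invariant point process of finite intensity, together with a graph structure on it isomorphic to $T_3$, forces the edges to have infinite expected length, and then one derives a contradiction with the exponential growth of $T_3$ versus the polynomial growth of $\R^d$. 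Concretely, the main step I would carry out is: by invariance and finite intensity, the Palm/stationary framework applies; let $o$ be a ``typical'' point of $V(T)$ (Palm version). For each $r$, the ball $B(o,r)\subset\R^d$ contains in expectation $\rho\cdot\mathrm{vol}(B(o,r))=\Theta(r^d)$ points of $V(T)$. On the other hand, consider the subtree $T_r$ of $T$ spanned by those vertices of $V(T)$ lying in $B(o,r)$ that are connected to $o$ within $B(o,r)$; one shows using the mass-transport principle that the number of such vertices grows at most polynomially in $r$, while the tree-graph distance from $o$ to the ``far'' vertices is forced to be small — leading to an inconsistency with the fact that a finite piece of $T_3$ of diameter $k$ has $\geq 2^{\,k/2}$ vertices.

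Let me state the version I would actually write, which is the cleanest: assume for contradiction finite intensity, pass to the Palm measure, and root at a typical vertex $o$. Pick a unit vector / direction randomly (uniform on the sphere, independent of everything), and for each vertex $v\in V(T)$ let its ``parent'' $p(v)$ be the neighbor $w$ of $v$ in $T$ such that the component of $T\setminus\{v,w\}$ not containing... — again the two-sided infinity blocks a purely combinatorial choice, so instead I choose the parent to be the neighbor $w$ minimizing $\langle w, u\rangle$ among... no: the right move is to use that $T_3$ has a free action of a free group or simply that there is an $\Aut(T_3)$-invariant way to pick an end is FALSE, so one must use randomness: let $\xi$ be a uniformly random end of $T_3$ (this exists as a factor of the iid labelling, or simply as additional invariant randomness), and let $p(v)$ be the neighbor of $v$ in the direction of $\xi$. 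This is $\Aut(T_3)$-invariant in distribution, hence by the duality correspondence lifts to an $\Isom(\R^d)$-invariant parent function on $V(T)$. Now run the mass transport: $v$ sends mass $1$ to $p(v)$. Every vertex sends mass exactly $1$. But $p$ is injective on $T_3$-vertices only up to the fact that each vertex has exactly two children — so each vertex \emph{receives} mass exactly $2$. By the Mass Transport Principle on $\R^d$ (valid for $\Isom(\R^d)$-invariant transports when the intensity is finite), expected mass out $=$ expected mass in, i.e. $1=2$, a contradiction. Hence the intensity is infinite.

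\textbf{Main obstacle.} The delicate point is legitimizing ``a uniformly random end of $T_3$'' as additional randomness that keeps the whole configuration $\Isom(\R^d)$-invariant, and making the Mass Transport Principle on $\R^d$ applicable. For the former I would invoke exactly the machinery of Section \ref{s.duality}: the embedded copy $T$ together with a choice of random end is an $\Aut(T_3)$-invariant decoration of $T_3$ by a copy of $\Z^d$ (the ambient lattice, after discretizing $\R^d$), so by Lemma \ref{dual} / Corollary \ref{dual_nice} it corresponds to an $\Aut(\Z^d)$-invariant object, which one then averages over $\Isom(\R^d)/\Aut(\Z^d)$ to recover full isometry-invariance; the random end survives because it was part of the decoration throughout. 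For the MTP on $\R^d$: finite intensity is precisely the hypothesis that makes the spatial mass-transport principle (each point sends/receives finite expected mass, expectations of out-mass and in-mass over a fundamental domain agree) hold, so the $1=2$ contradiction goes through. I expect writing this carefully — rather than the one-line ``folklore'' dismissal — to be the only real work; the combinatorial input (each vertex of $T_3$ has exactly two children once an end is fixed) is immediate.
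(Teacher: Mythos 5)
There is a genuine gap, and it sits exactly at the step you flagged as ``the delicate point'': an $\Aut(T_3)$-invariant (equivalently, after duality, $\Isom(\R^d)$-invariant) \emph{uniformly random end of $T_3$} does not exist, not as a factor of iid and not as ``additional invariant randomness.'' Any invariant random end would push forward to an $\Aut(T_3)$-invariant probability measure on the boundary $\partial T_3$, and no such measure exists: if $A$ is the set of ends whose ray from a vertex $o$ passes through a fixed neighbor $v$, then by vertex-transitivity and the three-fold symmetry at each vertex one gets $\mu(A)=1/3$ and $\mu(A^c)=1/3$, which is absurd. More to the point, the very mass transport you run (each vertex sends mass $1$ to its parent and receives mass $2$ from its children, so $1=2$) is the standard proof that \emph{no invariant parent function / random end exists} on $T_3$ in any unimodular or invariant setting. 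Your argument therefore proves the nonexistence of the object it assumes, independently of any intensity hypothesis; it never uses the finite-intensity assumption to \emph{produce} the end, so the contradiction you reach does not implicate that assumption. To salvage this route you would have to show that a finite-intensity invariant embedding of $T_3$ in $\R^d$ lets you \emph{construct} an invariant random end (or parent function), and that construction is missing.

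For comparison, the paper's proof avoids ends entirely and instead runs the nonamenability-versus-amenability obstruction that your second paragraph gestures at but does not complete. Assuming finite intensity $c$, it couples $V(T)$ with an independent randomly shifted copy of $c^{-d}\Z^d$ of the same intensity, takes an invariant perfect matching between the two point processes, and thereby transports the $3$-regular tree structure onto $\Z^d$ as an $\Aut(\Z^d)$-invariant decoration. After coloring edges to make the tree a Cayley diagram $F$ (the free product $\Z_2*\Z_2*\Z_2$), the Duality lemma (Corollary \ref{dual_nice}) turns this into an $\Aut(F)$-invariant copy of $\Z^d$ on $V(F)$. A random partition of that $\Z^d$ into large finite cubes is then an $\Aut(F)$-invariant percolation on the tree with all clusters finite but expected degree tending to $3>2$, contradicting the standard fact that an invariant percolation on $T_3$ with expected degree exceeding $2$ must have an infinite cluster. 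If you want to keep a mass-transport flavor, that is the inequality to aim for (via $|\partial_E S|\ge |S|+2$ for finite connected $S\subset T_3$), applied to an invariant partition into finite pieces obtained from the ambient euclidean space -- not to a parent map obtained from a random end.
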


\begin{proof}
Suppose to the contrary, that $V(T)$ has finite intensity $c$.
Let $T$ be as in the claim. 
Consider $Z$ to be a copy of the Cayley diagram of $c^{-d}\Z^d$ in $\R^d$, moved by a uniform element of $\Isom (c^{-d}\R^d)/\Aut (c^{-d}\Z^d)$, and independent from $T$. Then the joint distribution of $(T,Z)$ is invariant under $\Isom(\R^d)$. The two point processes $V(T)$ and $V(Z)$ have the same intensities, hence there exists an invariant perfect matching $m$ between them almost surely (e.g. consider the stable matching as in \cite{HHP}, using the easy fact that $(T,Z)$ has only trivial symmetries almost surely). One can use $m$ to define a 3-regular tree $T_Z$ on $V(Z)$, by adding a (new) edge between two vertices if their $m$-pairs are adjacent. This tree on the copy $Z$ of $\Z^d$ is $\Aut (\Z^d)$-invariant, by the $\Isom (\R^d)$ invariance of our construction. One can randomly invariantly 3-color the edges of $T_Z$ (and trivially orient them in both directions) to get a Cayley diagram of the 3-fold free product of the 2-element group with itself. Do this independently from all other randomness, and call this resulting Cayley diagram $F$. Then we obtain an $\Aut (\Z^d)$-invariant copy of $F$ on $\Z^d$. By Lemma \ref{dual}, this gives rise to an $\Aut (F)$-invariant copy of $\Z^d$ on $V(F)$. Let $P_i$ be a uniform random partition of $\Z^d$ into $2^i$ times $2^i$ cubes. Then the $P_i$ is also $\Aut (F)$-invariant. Let $F_i$ be the subgraph of $F$ consisting of edges whose endpoints are in the same piece of $P_i$. 
As $i$ tends to infinity, $\deg_{F_i}(x)\to 3$ for a fixed vertex $x$. But as soon as $\deg_{F_i}(x)> 2$, the forest $F_i$ has some infinite component, contradicting the fact that $P_i$ consists of only finite parts. This final contradiction shows that $V(T)$ cannot have finite intensity.
\end{proof}

The rest of this section is not needed for the results stated in the Introduction. We take a detour to some questions that were inspired by Lemma \ref{dual} and seem to be interesting on their own rights.

In our lemma a coupling between the two unimodular (decorated) graphs $(V,E)$ and $(V,F)$ {\it is given a priori}. The claim that the roles of ``underlying graph'' and ``decorating graph'' can be switched was trivial in this case. 
It is easy to check that the relationship ``$G$ has a unimodular decoration by $H$'' is transitive, hence it is an equivalence relation. (Note, however, that one has 
to choose the coupling instead of using an a priori given one. It is easy to construct an example where
$(G_1,o_1)$, $(G_2,o_2)$ and $(G_3,o_3)$ are unimodular random graphs, and there is a random rooted decorated graph $((V,E_1\cup E_2\cup E_3),o; E_1,E_2)$ such that $((V,E_i),o)$ has the same distribution as $(G_i,o_i)$ for every $1\leq i\leq 3$, and such that
$((V,E_i\cup E_{i+1}), o;E_i)$ is unimodular for $i=1,2$, but $((V,E_1\cup E_{3}), o;E_1)$ is not unimodular.) Call this relation between two unimodular (decorated) random graphs {\it decoration-equivalence}. A very similar notion, {\it coupling equivalence} was defined independently in \cite{AHNR}, where the following theorem was also proved (using similar arguments).

\begin{theorem}\label{OW}
Any two amenable unimodular random (decorated) graphs are decoration-equivalent.
%That is, if $\mu_E$ and $\mu_F$ are two amenable unimodular probability measures on $\Gstar$ then there is a unimodular random graph $(V,E),o;F)$ with $E,F\subset V^2$ such that $G_E=(V,E)$ and $G_F=(V,F)$ are connected, and $(G_E,o)$ has distribution $\mu_E$, $(G_F,o)$ has distribution $\mu_F$.
\end{theorem}
\noindent
One can view this theorem as a unimodular version of the Ornstein-Weiss theorem on the orbit equivalence of ergodic amenable free p.m.p. actions, \cite{OW}. 
%(two ergodic free p.m.p. actions of amenable groups are orbit equivalent)

%\begin{proof}
%By Corollary \def{unimod_spanning} and the fact that a unimodular spanning tree of a unimodular graph always has 1 or 2 ends, it is enough to prove the claim for unimodular spanning trees.
%Thus Lemma \ref{two_trees} finishes the proof.
%\end{proof}

%The next claim is a simple corollary of Lemma \ref{dual}. 

%\begin{corollary}\label{unimod_spanning}
%Leg $(G,o; T)$ be a unimodular random graph where $T$ is a spanning tree of $G$. Then $(T,o;G)$ is unimodular. In particular, $(G,o)$ and $(T,o)$ are decoration-equivalent.
%If $T$ is the Cayley diagram of $\Z$ (by one generator), then $(T,o; G)$ is an $\Aut(T)$-invariant decoration of $T$.
%\end{corollary}

%\begin{lemma}\label{two_trees}
%Let $T$ and $S$ be unimodular (decorated) random trees, and suppose that each has at most 2 ends. Then they are decoration equivalent. 
%\end{lemma}

\begin{proof} 
It is enough to prove that there exists a unimodular decoration of any amenable unimodular random graph $(G,o)$ with the standard Cayley diagram of $\Z$ (with some fixed vertex as root).
From the definition of amenability it follows that there is a sequence of coarser and coarser partitions ${\cal P}_n$ of $V(G)$ such that the sequence is jointly unimodular with $G$, every class of a ${\cal P}_n$ is finite, and any two vertices are in the same class of ${\cal P}_n$ for $n$ large enough. For $n=1,2,\ldots$, define an oriented path on every class of ${\cal P}_n$ uniformly at random, in such a way that the newly defined oriented paths contain those in ${\cal P}_{n-1}$. As $n\to\infty$ we obtain a copy of the oriented $\Z$ on $G$, just as wanted. 
\end{proof}

\begin{question}\label{uni_equivalence}
 What properties are preserved by decoration-equivalence? 
\end{question}
Besides amenability, treeability and cost
are trivially preserved. (At least this is the case if one defines $(G,o)$ to be treeable if it is decoration-equivalent to some unimodular random tree, and the cost of $(G,o)$ as $\inf \E(\deg_H (o))/2$ over all unimodular graphs $(H,o)$ that are decoration-equivalent to it.) In \cite{AHNR} it is shown that {\it strong soficity} (sofic approximability of every unimodular random marking of the graph) is also preserved by decoration-equivalence.
Furthermore, we are able to show that unimodular random trees of the same, finite expected degrees are decoration-equivalent. We do not present the proof here, because it is not in the scope of the present paper.
Similarly to decoration-equivalence of unimodular random graphs, one could look at the properties that are preserved by the following relation.
Say that two Cayley graphs (diagrams) $G$ and $H$ satisfy the relation $G\geq H$ if there is a random invariant decoration of $G$ with $H$, i.e., there is a random $F\subset V(G)^2$ that is $\Aut(G)$-invariant, and $(V(G),F)$ is almost surely isomorphic to $H$. %For Cayley diagrams, this relation is symmetric by Lemma \ref{dual}, and this special case was used in \cite{T2}. 
%How about the first $L^2$ Betti number?
%\begin{question}\label{invar_equivalence}
%Say that two Cayley graphs (diagrams) $G$ and $H$ satisfy the relation $G\geq H$ if there is a random invariant decoration of $G$ with $H$, i.e., there is a random $F\subset V(G)^2$ that is $\Aut(G)$-invariant, and $(V(G),F)$ is almost surely isomorphic to $H$. Is $\geq$ an equivalence relation? What properties are preserved by it?
%\end{question}
%By Gaboriau's theorem \cite{G}, no regular tree arises as an invariant decoration on some other regular tree of different degree.  

%\begin{question}
%Suppose that $G$ and $H$ are Cayley graphs, they have the same costs, and they are both treeable. Are they decoration equivalent?
%\end{question}
%A positive answer to Question \ref{treeeq} would imply a positive answer to the previous question. In particular, consider the trees that produce the cost for $G$ and for $H$, compose the decorations and apply the independent coupling.

\def\Fcal{{\cal F}}

\section{Dyadic fiid partitions with some connected pieces}\label{s.diadic}

If $\calp$ if some partition of a set $X$, denote by $\calp (x)$ the class of $x\in X$ in $\calp$.

\begin{lemma}\label{Tpartition}{\bf (Partitioning a 1-ended tree)}
Let $T=(T,o)$ be an ergodic unimodular random tree with one end and degrees at most $d$. Then there exists a fiid nonempty subset $U\subset V(T)$ that induces a connected subgraph of $T$, and a fiid sequence $(\calq_n)$ of coarser and coarser partitions of $U$ such that any $x,y\in U$ is in the same class 
of $\calq_n$ if $n$ is large enough, and 
for every $x\in U$
there are infinitely many $n$ such that $T|_{\calq_n (x)}$ is connected and has $2^n$ elements.
\end{lemma}
Let us emphasize that the real difficulty in the above lemma is the requirement that the restriction of $T$ to many of the dyadic parts be connected. Without the connectedness requirement, the lemma would easily follow from Section 4 of \cite{T}.

\begin{proof}
Fix an arbitrary sequence $n_1<n_2<n_3<\ldots$ with the property that $n_{i+1}/n_i>3+2\log (d)$ for every $i$. 
From now on, $\tau$ will always denote an arbitrary subtree of $T$.
Given some $x\in V(\tau)$, define $\tau_x$ as the finite subtree induced by vertices in $\tau$ that are separated from infinity by $x$ (including $x$). 
Observe that for every $x\in V(\tau)$
\begin{equation}
|\tau_x|\leq 1+(d-1)\max_{x\sim y,y\in \tau_x} |\tau_y|,
\label{eq:Txbound}
\end{equation}
where $x\sim y$ indicates that $x$ and $y$ are adjacent in $\tau$.
For $k>0$, define $S_k(\tau)=S_k=\{x\in V(\tau):\, |\tau_x|\geq 2^k\}$. The set $S_k$ induces a subtree of $\tau$; we will refer to this subtree as $\tau|_{S_k}$. Let $L_i(\tau)$ be the set of vertices of $\tau|_{S_{n_i}}$ that have degree 1 in $\tau|_{S_{n_i}}$.
If $x\in L_i(\tau)$, then we have
\begin{equation}
2^{n_i}\leq |\tau_x|\leq 1+(d-1)2^{n_i},
\label{eq:Libound}
\end{equation}
using \eqref{eq:Txbound} and the fact that the neighbors of $x\in L_i(\tau)$ in $\tau_x$ are not in $S_{n_i}$.
Note also that if $x,y\in L_i (\tau)$, $x\not=y$, then $\tau_x\cap \tau_y=\emptyset$.

For sets $A$ and $B$, say that $A$ {\it cuts} $B$ if $A\cap B\not=\emptyset$ and $B\setminus A\not=\emptyset$. If $\Pcal$ is a partition, say that $\Pcal$ cuts $B$ if some class $A\in\Pcal$ cuts $B$.

Define $\Pcal_0^{0}$ as the partition consisting of classes $\{x\}$, $x\in V(T)$. Given $i\in \N$, suppose that partition $\Pcal_{j}^{i-1} $ has been defined for every $0\leq j<i$, and that 
\begin{enumerate}[label={(\arabic*)}]
\item every class of $\Pcal_{j}^{i-1}$ induces a connected subgraph of $T$;
%\item every class of $\Pcal_{j}^{i-1}$ is either a singleton, or has $2^{j}$ elements;
\item whenever $j'<j$, $\Pcal^{i-1}_j$ is coarser than $\Pcal^{i-1}_{j'}$. In other words, $\Pcal_{j}^{i-1}$ does not cut any class of $\Pcal_{j'}^{i-1}$. 
\item Every class of $\Pcal^{i-1}_{j}$ either has $2^{n_{j}}$ elements, or it is a singleton.
\end{enumerate}
We will define $\Pcal^{i}_j$ for every $j\leq i$, such that $(1)-(3)$ is satisfied by $\{\Pcal^{i}_j, \, j\leq i\}$ (with $i-1$ replaced by $i$). 
Let $\tau$ be an arbitrary subtree of $T$. Since (1) and (2) are satisfied, it is easy to check that the restriction of 
$\Pcal_{j}^{i-1}$ to $\tau$ also satisfies (1) and (2).

For every $x\in L_i (\tau)$, we will take a connected subgraph $C_x (\tau)=C_x$ of $\tau_x$ such that $x\in C_x$, and $|C_x|=2^{n_i}$. Furthermore, $C_x$ will be such that for every $j<i$ the number of classes in $\Pcal_{j}^{i-1}$ that are cut by $C_x$ is at most 1. Such a $C_x$ exists for the following reason. Starting from $c_1:=\{x\}$, define a sequence of {\it connected} subgraphs $c_1\subset c_2\subset c_3\subset\ldots$ in $\tau_x$, by always adding one vertex to the previously chosen set (so $|c_k\setminus c_{k-1}|=1$). The rule for this sequence is that whenever our current set $c_k$ cuts some class $\pi$ of $\Pcal_{j}^{i-1}$ for some $j<i$, we will add vertices from $\pi$ to the sequence as long as $\pi$ gets fully contained in some $c_m$. There may be more than one $j$ and $\pi (j)\in\Pcal_{j}^{i-1}$ that is cut by $c_k$, but by $(2)$ there is always a smallest one with regard to containment, and so we only need to follow the above rule for this smallest $\pi$ until it gets fully contained.
Such a growing sequence exists, and by definition, it always cuts at most one class of $\Pcal_{j}^{i-1}$ for every $j<i$. 
We have just seen that a $C_x$ as above exists; choose $C_x$ randomly (using some pre-fixed fiid rule) among all the possible choices satisfying the above constraints. The $C_x$ are pairwise disjoint over $x\in L_i (\tau)$, because the $\tau_x$ are pairwise disjoint.

We introduce notation $T^{(1)}:=T$. Choose $C_x=C_x(T)$ as defined above, for every $x\in L_i (\tau)$.
Consider the infinite component $T^{(2)}(i)=T^{(2)}$ of $T\setminus L_i(T)$, and repeat the above procedure for this new tree $T^{(2)}$, using the restriction of $\Pcal_{j}^{i-1}$ to it. 
%(We hide dependence of the $i$ for the sake of simpler notation.) 
%We can define $T_x^{(2)}$, $S_{n_i}(T^{(2)})$, $L_i(T^{(2)})$ as above, and apply the above procedure, using the restrictions of the $\Pcal_{j}^{i-1}$ to $T^{(2)}$, 
These restrictions still satisfy (1) and (2), and we can define $C_x(T^{(2)})$ for every $x\in L_i (T^{(2)})$. (Note that condition (3) was not used in the above construction of $C_x$.)

Continue and repeat the above procedure for the series $T^{(k)}(i)=T^{(k)}$ as $k=3,4,\ldots$, defined recursively and similarly to the way we defined $T^{(2)}$. For every $x\in V(T)$ there is at most one step $k$ in which 
$x\in L_i (T^{(k)})$, and hence
$C_x=:C_x^{(k)}$ is defined.
One arrives to a family of pairwise disjoint connected subgraphs $C^{(k)}_x$ of $T^{(k)}$ as $x\in L_i(T^{(k)})$, of size $2^{n_i}$ each. The family of all these $C^{(k)}_x$ (as $k=1,2,\ldots$, and $x\in L_i(T^{(k)})$), together with all the singletons not contained in any of them, defines a partition of $T$. Call this partition $\Pcal_i^{i}$. (We will define the $\Pcal_j^{i}$ with $j<i$ later.)
We mention that 
\begin{equation}
\P(o\in \cup_k L_i (T^{(k)}))\leq 2^{-n_i},
\label{eq:uj}
\end{equation}
as can be seen by an easy mass transport argument where $x\in L_i(T^{(k)})$ sends mass 1 to every point of $T_x^{(k)}$.

Now, 
\begin{equation}
\P(o \text{ is in some } C^{(k)}_x)\geq \inf_{k,x} |C^{(k)}_x|/|T^{(k)}_x|\geq 2^{n_i}/(d2^{n_i})\geq
1/d
\end{equation}
using 
\eqref{eq:Libound} and a standard MTP argument.
So we have that 
\begin{equation}
\P(o\text{ is in a class of size } 2^{n_i}\text{ of } \Pcal^{i}_i)\geq 1/d
\label{eq:first}
\end{equation}

\def\supess{{\rm supess}}

For every $j<i$, define $\Pcal^{i}_j$ to be the collection of all classes in $\Pcal^{i-1}_j$ that are not cut by any of the classes in $\Pcal^{i}_i$, and let all vertices outside of these classes be singletons in $\Pcal^{i}_j$. 
The $\Pcal^{i}_j$ defined this way satisfy (1), (2) and (3) with $i-1$ replaced by $i$, whenever $j\leq i$. Furthermore, $\Pcal^{i-1}_j$ is always coarser than $\Pcal^{i}_j$ by definition, and all its classes are finite, hence the sequence $\Pcal^{i}_j$ has a weak limit as $i\to\infty$.

So define the weak limit $\Q_j:=\lim_{i\to\infty}\Pcal^{i}_j$. The partition $\Q_j$ inherits the property that every class of it induces a connected subgraph of $T$.
Then, using \eqref{eq:first}, we have
$$
\P(o\text{ is in a class of size } 2^{n_j}\text{ of } \Q_j)\geq \P (|\Pcal_j^{j}(o)|=2^{n_j})-\sum_{i>j} \P(\Pcal_j^{j}(o) \text{ is cut by } \Pcal_i^{i})
$$
\begin{equation}
\geq
1/d-2\sum_{i=j+1}^\infty d 2^{n_j}/2^{n_i},
\label{eq:second}
\end{equation}
as we explain next.
There are {\it two ways} for the class $\pi=\Pcal_j^{j}(o)$ to be cut by $\Pcal_i^{i}$. The first one is if $L_i(T^{(k)})\cap\pi\not=\emptyset$ for some $k\in\N$. If this is not the case, there is another way: if there is an $x\in V(T)$ such that
$\pi$ is the (single) class of $\Pcal_j^{j}$ that $C_x\in \Pcal_i^{i}$ (as in the above construction) cuts. The probability for the ``first way'' can be bounded using Chebyshev's inequality:
$$\P(L_i\cap\pi\not=\emptyset)\leq \E(|L_i\cap\pi|),
$$
where $L_i:=\cup_k L_i(T^{(k)})$.
For the right hand side we have the following upper bound, where $\supess |\pi|$ denotes the essential supremum of the size of $\pi=\Pcal^j_j(o)$ with regard to the random graph and partition (in our case we have $\supess |\pi|\leq d2^{n_j}$).
$$\E(|L_i\cap\pi|)\leq \sum_\ell \P(|\pi|=\ell)\sum_{m=0}^\ell \P(|\pi\cap L_i|=m\bigl| |\pi|=\ell)m\leq$$
$$\leq \supess |\pi|\sum_\ell \P(|\pi|=\ell)\sum_{m=0}^\ell \P(|\pi\cap L_i|=m\bigl| |\pi|=\ell)m/\ell\leq $$
$$\leq d2^{n_j} |\pi|\sum_\ell \P(|\pi|=\ell) \P(o\in L_i\bigl| |\pi|=\ell)\leq
d2^{n_j} |\pi|\P(o\in L_i)\leq d2^{n_j}2^{-n_i},
$$
using \eqref{eq:uj} for the last inequality. To summarize, 
\begin{equation}
\P(L_i\cap\pi\not=\emptyset)\leq d2^{n_j}2^{-n_i}.
\label{eq:firstway}
\end{equation}
Now let us continue with an upper bound on the probability of the ``second way'' for $\pi=\Pcal_j^{j}(o)$ to be cut by $\Pcal_i^{i}$, assuming that it is not cut by the set $L_i$. 
Note that $\{T^{(k)}_x:x\in L_i(T^{(k)}), k\in\N^+
\}$ is a partition of $T$. 
So the assumption that $\pi$ is not cut by $L_i$ implies that there is a unique $k$ and $x\in L_i(T^{(k)})$ such that $\pi\subset T^{(k)}_x$. We know that $C_x (k)$ cuts at most one of the classes of $\Pcal_j^{j}$. If there exists such a class, denote it by $\bar\pi$, otherwise let $\bar\pi:=\emptyset$. Then, using the MTP for the first inequality,
$$\P(\pi \text{ is cut by }C_x (k) |L_i\cap\pi=\emptyset)=\P(o\in \bar\pi|L_i\cap\pi=\emptyset)\leq \frac{|\bar\pi|}{|T_x|}\leq \frac{2^{n_j}}{2^{n_i}}.
$$
To see \eqref{eq:second}, note that the sum of this and the right side of \eqref{eq:firstway}, over all $i>j$,
gives an upper bound for the total probability of $\Pcal_j^{j}(o)$ being cut in a later stage.

\def\Scal{{\cal S}}

The inequality \eqref{eq:second} shows that for every $j$, $o$ is in a non-singleton class of $\Q_j$ with probability at least $1/2d$ (using the condition on the $(n_i)$). By the Borel-Cantelli lemma, with positive probability $o$ is contained in a non-singleton class of $\Q_j$ (that is, a class of size $2^{n_j}$) for infinitely many $j$, and we know that all these classes induce a connected graph in $T$. By the ergodicity of $T$, there exist vertices with this property almost surely; let their set be $U$. 
On the event $o\in U$, the same holds for every element $x\in \Q_j (o)$ for every such $j$. From this 
we can conclude that $T|_U$ is connected. Finally we show that $U$ is a fiid. The sequence $(\Q_j)$ was defined as a fiid. Note that
$U_n:=\bigl\{x\in V(T): \bigl|\{i:|\Q_i (x)|=2^{n_i}\}\bigr|\geq n
\bigr\}$ is a fiid, $U_{n+1}\subset U_n$, and $U=\lim U_n$. Hence $U$ is also a fiid.
\end{proof}

Although we are not going to use the next lemma, we claim it as a straightforward consequence of Lemma \ref{Tpartition}.
\begin{lemma}\label{Gpartition}{\bf (Partitioning an amenable graph)}
Let $G=(G,o)$ be an ergodic amenable unimodular random graph with one end and degrees at most $d$. 
Then there exists a fiid nonempty subset $U\subset V(G)$ that induces a connected subgraph of $G$, and a fiid sequence $(\calp_n)$ of coarser and coarser partitions of $U$ such that any $x,y\in U$ is in the same class 
of $\calp_n$ if $n$ is large enough, and 
for every $x\in U$
there are infinitely many $n$ such that $T|_{\calp_n (x)}$ is connected and has $2^n$ elements.
\end{lemma}

\begin{proof}
Choose an fiid 1-ended spanning tree of $G$, as in \cite{T3}.  Apply Lemma \ref{Tpartition} to this spanning tree.
\end{proof}

\section{Representation of a unimodular one-ended tree by a tiling; generalization to amenable graphs}\label{s.treetiling}

\def\Vdiad{V^{{\rm top}}}
\def\Vconn{V^{{\rm top}}}
\def\Tcal{{\cal T}}
\def\Tcaleq{{\cal T}_{\rm eq}}
\def\Rcal{{\cal R}}
\def\f{f}
\def\Cube{{\rm Cube}}

Given a copy of the graph or diagram $\Z^d$, say that we {\it expand it to} $\R^d$ if we fill up every cube (as a subgraph) of $\Z^d$ with a unit cube of $\R^d$ (as a polyhedron). In other word, we think about $\Z^d$ as being embedded in $\R^d$ in the usual way. If $x$ is a vertex of the $\Z^d$ that was expanded to $\R^d$, we assign a cube $\Cube _x=x+[-1/2,1/2]^d$ to $x$.

\begin{theorem}\label{Tdomains}{\bf (Fiid tiling representation of a one-ended tree)}
Let $T$ be an ergodic unimodular random tree with one end and degrees at most $D$. Then there is an fiid decoration of a fiid subset $U$ of $V(T)$ with the Cayley diagram $\Z^d$, (meaning that a cubic grid is defined on $U$ as vertex set). Furthermore, if $d\geq 2$, there is a tiling of $\R^d$ that represents $T$, and a bijection from $V(T)$ to the tiles, and this bijection as well as its inverse preserves adjacency. The tiling and the bijection are both given as a fiid from $T$.
%Looking at this copy of the points of $\Z^3$ as a subset of $\R^3$, one can further assign tiles in $\R^3$ to the points of $V(T)$ in such a way that the tiles give a tiling of $\R^3$ and it represents $T$. This assignment can also be defined as a fiid map from $V(T)$.
\end{theorem}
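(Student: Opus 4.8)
The plan is to use Lemma \ref{Tpartition} to obtain the fiid subset $U\subset V(T)$ inducing a connected subgraph, together with the fiid sequence of coarser and coarser partitions $(\calq_n)$ having the stated ``many connected dyadic pieces" property. I would then build the cubic grid $\Z^3$ on $U$ by an inductive construction synchronized with the $\calq_n$: at level $n$, a class $\calq_n(x)$ that happens to be connected and of size $2^n$ will be identified with a $2^{a}\times 2^{b}\times 2^{c}$ sub-box of $\Z^3$ (with $a+b+c=n$, cycling through which coordinate doubles so that the proportions stay bounded), in such a way that each of its two halves at level $n-1$ (which, if also ``good", were already given their own sub-box) sits as the corresponding half of the box, and that for a ``good" class the box structure is forced up to a bounded ambiguity resolved by a pre-fixed fiid rule. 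The point of the connectedness hypothesis in Lemma \ref{Tpartition} is precisely that a connected subgraph of $T$ on $2^n$ vertices can be laid inside a single combinatorial $n$-dimensional box in $\Z^3$ compatibly with a chosen spanning path/tree, as illustrated on Figure \ref{farepi}: one routes the $2^n$ vertices of the piece along a Hamiltonian-type curve of the box. Since every $x\in U$ is in a good class for infinitely many $n$, these local grids are nested along an increasing sequence of scales and their union is a single copy of $\Z^3$ on all of $U$; the coherence conditions make the limit well defined and an fiid of $\lambda$.

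Next I would expand this $\Z^3$ to $\R^3$ in the sense defined just before the theorem: each $x\in U$ receives the unit cube $\Cube_x=x+[-1/2,1/2]^3$. To obtain a tiling of $\R^3$ representing $T$ (and not just of the region covered by $U$), I subdivide: each vertex $v\in V(T)$ is assigned the union of the cubes $\Cube_x$ over those $x\in U$ that are ``routed to $v$" in the construction above, and for vertices of $T$ not in $U$ — or for the leftover space — I attach their cubes along the unique path in $T$ toward $U$, so that every $v\in V(T)$ gets a nonempty connected polyhedral tile and the tiles partition $\R^3$. Concretely, since $U$ is connected in $T$ and the complement components of $U$ in $T$ are finite (one-endedness), one can push each finite ``dangling" piece of $T$ into the tile of the $U$-vertex it hangs off of, splitting a cube into finitely many subcubes if several $v$'s must be accommodated; the adjacency graph of the resulting tiles is exactly $T$ because two tiles share a $2$-face iff the corresponding vertices are $T$-adjacent, which one arranges by keeping the routing of $T$ into $\Z^3$ a graph embedding (neighbors in $T$ go to grid-adjacent cube clusters). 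Then $\calP:=\{\text{these tiles}\}$ satisfies all four conditions of Definition \ref{maindef}, and the map $V(T)\to\calP$ together with its inverse preserves adjacency, as required; everything constructed is a local function of $\lambda$, hence fiid, because each of $U$, $(\calq_n)$, the grid, the routing, and the cube subdivisions is.

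The main obstacle, and where I would spend most of the work, is the inductive definition of the grid so that it is simultaneously (i) consistent across scales (the level-$n$ box of a good class restricts correctly to the level-$(n-1)$ boxes of its good sub-halves), (ii) a genuine embedding of $T$ near $U$ so that $T$-adjacency becomes grid-adjacency and ultimately $2$-face adjacency of tiles, and (iii) canonical enough — up to a bounded finite choice — to be realizable as a factor of iid rather than merely an invariant random object. The geometric heart of (ii) is the claim that a connected tree-piece on $2^n$ vertices can always be threaded through an $n$-box of $\Z^3$ with a prescribed boundary behavior inherited from the previous stage; I would prove this by an explicit inductive ``fold the box in half" argument: given the two half-boxes already carrying the two sub-pieces, and given where the connecting $T$-edge crosses between them, bend a Hamiltonian curve of the full box so that it traverses one half-box, crosses at the prescribed spot, and traverses the other, after possibly cycling which coordinate is doubled to keep all side lengths within a factor $2$ of $2^{n/3}$. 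Handling the non-good levels (where $\calq_n(x)$ is a singleton or fails to be connected/of size $2^n$) requires care so that these ``gaps" between consecutive good scales do not break the nesting; since between two good scales for $x$ the classes only coarsen, one simply carries the grid forward unchanged on $\calq_n(x)$ until the next good scale, at which point it is absorbed as a half (or, after several coarsenings, a sub-box) of a larger good box — the bounded-proportion bookkeeping on $(a,b,c)$ is what makes this possible and is the one genuinely fiddly computation.
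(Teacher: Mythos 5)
Your first half—building the nested cubic grid on $U$ from the good classes of Lemma \ref{Tpartition}, with boxes of dimensions $2^{a}\times 2^{b}\times 2^{c}$, $a+b+c=n$, cycling the doubled coordinate and resolving the bounded ambiguity by a pre-fixed fiid rule—is essentially the paper's construction (the paper uses $2^{\lfloor m/3\rfloor}\times 2^{\lfloor (m+1)/3\rfloor}\times 2^{\lfloor (m+2)/3\rfloor}$ boxes on the classes $\Q_{m(x)}(x)$ lying entirely below $x$, and adds a short mass-transport argument to rule out the limit being a half-, quarter- or eighth-space, which you should include). The essential divergence, and the gap, is your item (ii): you require the grid to be a genuine embedding of $T$, with $T$-neighbors sent to grid-adjacent cube clusters, realized by threading each good class along a ``Hamiltonian-type curve'' of its box. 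A Hamiltonian curve only accommodates a \emph{path}; a good class is an arbitrary connected subtree on $2^n$ vertices with degrees up to $d$, and such a tree is in general not a subgraph of any box in $\Z^3$ (a vertex of degree $7$ already obstructs this), nor is it clear how to realize it as face-adjacent unit-cube clusters inside a prescribed box with \emph{no unwanted adjacencies}, consistently with the boundary behavior forced by the previous scale. This is the hard geometric content of the theorem and your plan does not supply it. Your treatment of vertices outside $U$ (``split a cube into finitely many subcubes'') has the same unresolved difficulty: one must create exactly the adjacencies of $T$ and suppress all others.

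The paper avoids this entirely by decoupling the grid from the tree structure: within a box $L(x)$ the assignment of vertices of $\Q_{m(x)}(x)$ to grid positions is \emph{arbitrary} (subject only to extending the earlier boxes), because adjacency of tiles is produced not by grid adjacency but by \emph{nesting}. One sets $\tau'(x)$ to be the brick $\bigcup_{v\in L(x)}(v+[-1/2,1/2]^3)$ shrunk inward by $2^{-k}$, minus the previously built $\tau'(y)$ for $y<x$; interpolating polyhedra $\tau'(z)$ with pairwise disjoint boundaries are inserted for the intermediate vertices $z$ on the paths down to smaller boxes; and the final tile is $\tau(v)=\tau'(v)\setminus\bigcup_{w<v}\tau'(w)$. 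Each tile is then a shell whose inner boundary is exactly the union of the outer boundaries of its $T$-children's tiles and whose outer boundary is shared with its $T$-parent's tile, so the adjacency graph of the tiling is $T$ automatically, and non-adjacent vertices have tiles with disjoint closures of boundaries by construction. If you want to salvage your route you would need to prove the embedding claim in (ii), which is false as stated; otherwise you should replace the ``routing'' step by a nesting argument of this kind. You should also verify condition 3 of Definition \ref{maindef} (local finiteness), which neither follows for free nor is addressed in your sketch.
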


We remark that the copy of $\Z^d$ and the tiling in the theorem is defined only up to isometries. %Also note that the theorem works for $d=2$, unlike most of our results.

\begin{proof}
For the ease of notation we only prove the theorem for $d=3$, but the same argument applies to any $d\not= 3$.
The claim about the decoration by $\Z^3$ follows from Theorem 4.1 in \cite{T}. We provide a proof here because the notation will be needed for the second part of the theorem.
As before, for $x\in V(T)$, let $T_x$ be the (finite) subgraph of $T$ induced by those vertices of $T$ that are separated from infinity by $x$ (including $x$).
For $x,y\in V(T)$, write $y\leq x$ if $y\in T_x$, and write $y< x$ if $y\leq x$ and $y\not= x$.
Let $\Q_n$ and $U$ be as in Lemma \ref{Tpartition}; recall that $T|_U$ is connected.
Let $\Vdiad$ be the set of $x\in U$ such that for some $m$, $T|_{\Q_m (x)}$ is connected and $y\leq x$ for every $y\in {\Q_m (x)}$.
%In particular, elements of $\Vdiad$ are maximal in some $\Q_m(x)$ with respect to the partial order ``$<$" on $V(T)$.
%and $T|_{\Q_m (x)}$ is connected. 
For every $x\in\Vdiad$ choose a maximal $m=m(x)$ with this property.
It is straightforward from Lemma \ref{Tpartition} that $\Vdiad$ is nonempty. 
Let $\Vdiad_1=\bigl\{x\in\Vdiad :\,
T_x\cap \Vdiad=\{x\}\bigr\}$.
Define $\Vdiad_n$ recursively, to be the set of points $x\in\Vdiad$ such that there exists a $y<x$ with $y\in\Vdiad_{n-1}$ and such that for every $y<x$ with $y\in \Vdiad$, we have $y\in\Vdiad_{k}$ for some $k<n$. If $x\in \Vdiad_n$, then for every $y\in\Vdiad_n\cap \Q_m (x)$ we have $m(y)<m(x)$, hence $\Q_{m(y)}\subset Q_{m(x)}$. An example is shown on the left side of Figure \ref{farepi}.

From now on, $\Z^3$ will stand for the Cayley diagram.
First we define a fiid copy of the graph $\Z^3$ on the vertex set $U$ (that is, a fiid decoration of $U$ in $G$ by $\Z^3$). As $n=1,2,\ldots$, for every element $x$ of $\Vdiad_n$, define a $2^{\lfloor m(x)/3\rfloor}\times 2^{\lfloor (m(x)+1)/3\rfloor}\times 2^{\lfloor (m(x)+2)/3\rfloor}$ grid $L(x)$ on $\Q_{m(x)}(x)=\Q_{m}(x)$ as vertex set, with edges oriented and colored by the first, second and third generator of $\Z^3$, respectively, as in the Cayley diagram of $\Z^3$, and in such a way, that it respects all the $L(y)$ defined in earlier steps (in other words, if $y<x$, $y\in \Vdiad$, then 
define $L(x)$ so that $L(y)\subset L(x)$ and the colors and orientations agree). This is possible because of the previous paragraph, and since the $L(y)$ also have a dyadic form but smaller size.
Beyond this constaint, the adjacencies of the actual points of $\Q_m(x)$ in this grid can be determined arbitrarily, but one should follow some fixed fiid rule (as usual). By Lemma \ref{Tpartition}, every point $u\in U$ is in infinitely many $\Q_m(x)$. Hence the limit of the $L(x)$ has to be $\Z^3$, or an infinite half-space, quarter-space or eigth-space of $\Z^3$ -- however, only the first one is possible, by a simple MTP argument (otherwise one could assign points of the border to the vertices, in a way that the same border point is assigned to infinitely many points). Again by Lemma \ref{Tpartition}, any two points of $U$ are in the same $\Q_m$-class if $m$ is large enough. Therefore, the limit of the $L(x)$ is in fact a connected copy of $\Z^3$ on $U$, which we defined as a fiid. 
%Define the map $\lambda:U\to\Z^3$ using this copy, through setting $\lambda (o)=0$.

In the rest of the proof we explain how to decorate every point
$v$ of $V(T)$ by a tile $\tau (v)$ of $\R^3$, with $\R^3$ given as the expansion of the $\Z^3$ that we just constructed on $V(T)$. These tiles will be polyhedra, and moreover, {\it bricks} (higher dimesional rectangles) with finitely many possible holes in them, such that the holes are also bricks. The collection $\{\tau (v):\, v\in V(T)\}$ will give a partition of $\R^3$, with adjacency relation isomorphic to that of $T$. In other words, we will define a representation of $T$ by a tiling. This will be done as a fiid.
%The representation by a tiling will be defined using $\lambda$ (thinking about $\Z^3$ as a graph naturally embedded into $\R^3$). Hence one will be able to think of it as a decoration of $\lambda(v)\in\Z^3$ with tile $\tau (v)$ (plus $v$). DUALITAS LEMMA ITT JON BE 

\def\thin{{\rm Thin}}
\def\F{{\cal F}}

\begin{figure}[htbp]
\vspace{0.2in}
\begin{center}
\includegraphics[keepaspectratio,scale=0.60]{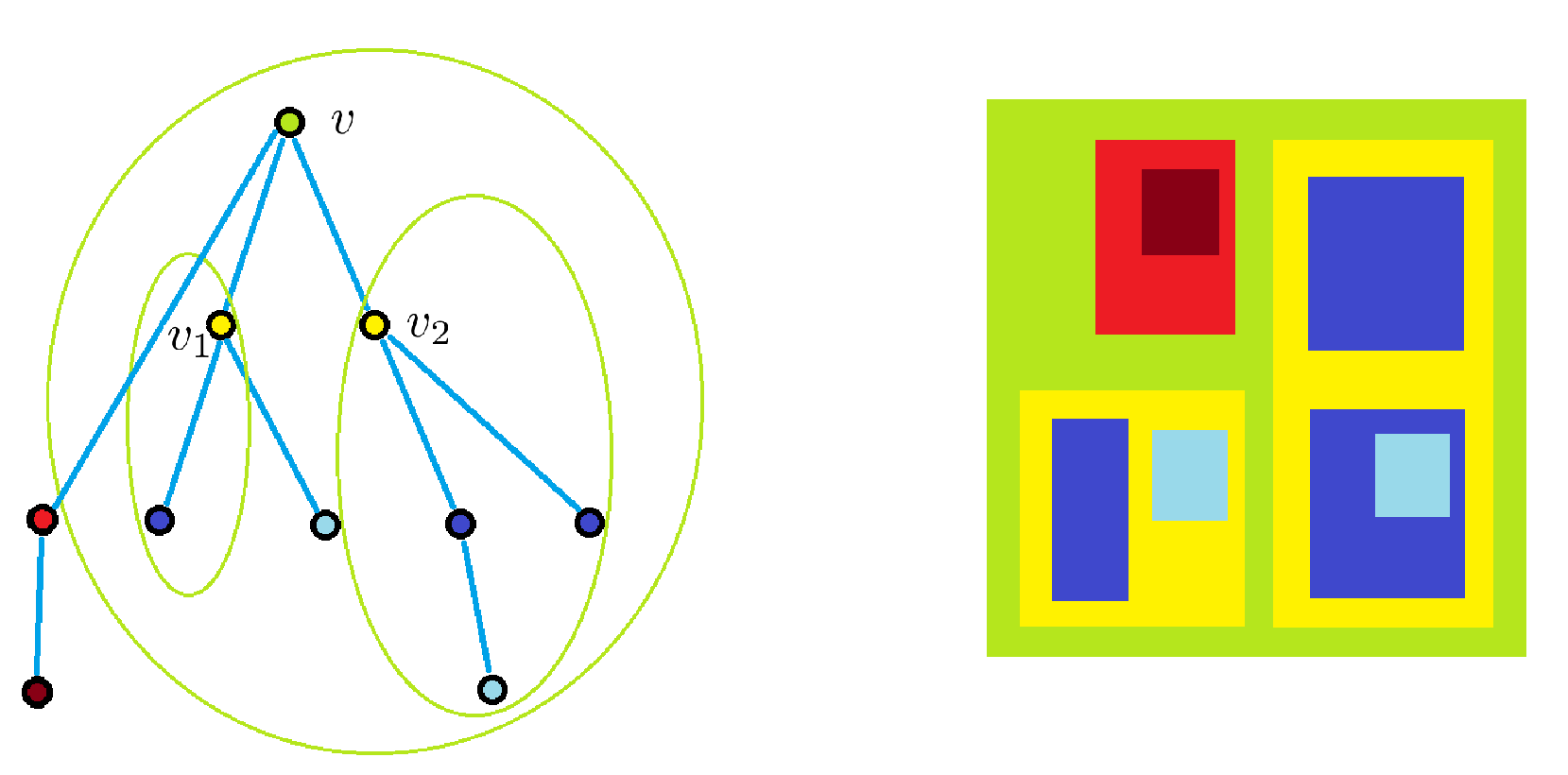}
\caption{Representing $T$ by a tiling. Here $v_1, v_2\in\Vdiad _1$, and $v\in \Vdiad_2$, $n_i=i$ is assumed. $\Q_{m(v_1)}(v_1)=\Q_1(v_1)$, $\Q_{m(v_2)}(v_2)=\Q_2(v_2)$ and $\Q_{m(v)}(v)=\Q_3(v)$ are circled.
When we start the construction of the representation of $T_v$, the representation of $T_{v_1}$ and $T_{v_2}$ is already given (the yellow domains and the ones surrounded by them on the right).
}\label{farepi}
\end{center}
\end{figure}

For a polyhedron $H$ and $\eps>0$, let $\thin (H,\eps)$ be the subset of $H$ of points at distance at least $\eps$ from the complement of $H$. Recall $\Vdiad\subset U\subset V(T)$; now we will partition $V(T)$ to pieces so that every piece corresponds bijectively to one point of $\Vdiad$. Namely, as $x\in \Vdiad$, let $\F (x)=\{y:\, y\in T_x,$ there is no element of $ \Vdiad$ but $x$ on the $x-y\text{ path in } T
\}$.
The set $\{\F (x):\, x\in \Vconn\}$ is a partition of $V(T)$. Before we proceed, let us highlight a certain property of $\Vconn$. 
The set $\Vdiad$ inherits a tree structure $\Tcal (\Vdiad)$ from $T$: let $v\in U$ and $w\in U$ be adjacent if $w$ separates $v$ from infinity in $T$, and any other vertex of $\Vdiad$ either separates both $v$ and $w$ from infinity, or none of them.
Consider the subgrid $L(x)$ (as defined above). This contains all the $L(y)$ for $y\in T_x\cap \Vdiad$. 
%We will expand these subgrids first, to obtain a brick, and then we remove all the smaller bricks inside it, that correspond to the elements of $\Vdiad$ that $x$ separates from infinity in $\Tcal (\Vdiad)$. This way we will get a tiling in the limit, that represents $\Tcal (\Vdiad)$. We turn this into a tiling representation of $T$ next.

As $k=1,2,\ldots$, for every $x\in \Vdiad _k$ we will define the tile $\tau (x)$.
First, define recursively 
$$\tau'(x)=\thin(\cup_{v\in L(x)} \Cube _v , 2^{-k})\setminus \cup_{y<x, y\in\Vdiad} \tau' (y).$$
Now extend the definition to all $z\in V(T)$, by defining it for every $z\in\F (x), z\not=x$, $x\in \Vconn$ as follows. Fix $x\in\Vdiad$. Define a polyhedral subset $\tau' (z)$ of $\tau' (x)$, such that it satisfies the following properties, but otherwise arbitrarily. For every $y\in\Vdiad$ with $y<z<x$, we will have $\tau'(y)\subset \tau' (z)\subset \tau'(x)$, and furthermore, $\partial\tau'(y)\cap\partial\tau'(z)=\emptyset$ and 
$\partial\tau'(z)\cap\partial\tau'(x)=\emptyset$. Finally, if $z'\in \F (x)$ and
$z'<z$ then $\tau'(z')\subset \tau'(z)$ and $\partial\tau'(z')\cap\partial\tau'(z)=\emptyset$.  Such a definition of the $\tau' (z)$ is possible locally, and we can make it an fiid by fixing some central rules for these choices. Finally, for each $v\in V(T)$, define $\tau (v):=\tau'(v)\setminus\cup_{w\in T_v} \tau'(w)$. See Figure \ref{farepi} for a summary of the construction.

The properties in Definition \ref{maindef} are trivially satisfied by the above construction (with tiles that are bounded polyhedra with finitely many 0-faces), only the third one requires some reasoning. So consider some fixed ball $B$ in the $\R^3$ that arose by expanding $\Z^3$. 
We may assume that the center of $B$ is in $\Z^3$. 
Choose $x\in U$ such that $V(T_x)$, viewed as a subset of $\Z^3\subset\R^3$, has convex hull that contains $B$. By construction, then the only tiles that may intersect $B$ 
are $\tau (y)$, $y\in T_x$. 
%To summarize, the $\{\tau(v):v\in T(V)\}$ will be a set of bricks in $\R^3$ surrounding each other in a tree-like fashion. Defining adjacency between $\tau '(v)$ and $\tau (v')$ if $\tau '(v)\subset\tau' (v')$ and for no $w\in V(T)\setminus\{v,v'\}$ is $\tau '(v)\subset \tau'(w)\subset\tau' (v')$, we get a directed graph on $\{\tau'(v):v\in T(V)\}$ that is isomorphic $T$ with all edge directed towards the single end. 
\end{proof}

\def\Vor{{\rm Vor}}

The next lemma will be needed to extend Theorem \ref{Tdomains} and construct an invariant tiling representation of an arbitrary amenable unimodular graph with one end.

Let $\eps>0$ be arbitrary, $A,B\subset \Rr$. 
Define $v (A,B,\eps):=\{x\in\Rr\,:\, \dist (x,A)<\min (\dist (x,B),\eps)
\}$, and let $\Vor (A,B,\eps)$ be the connected component of $v(A,B,\eps)$ containing $A$. 
Say that a collection $P$ of polyhedra represents a finite graph $G$ if there is a bijection mapping to every vertex $x\in V(G)$ a polyhedron $P(x)\in P$ such that $P(x)$ and $P(y)$ share a hyperface if and only if $x$ and $y$ are adjacent.

\begin{lemma}\label{addedge}{\bf (Adding an edge)}
Let $G_0$ be a finite connected graph represented by a collection $P$ of polyhedra, and suppose that every polyhedron in $P$ has finitely many 0-faces. Let
$\Delta\subset\R^d$ be the union of the closures of these polyhedra. 
Let $x,y\in V(G_0)$, and $I$ be a path in $G_0$ between $x$ and $y$. Then the new graph $G_0\cup\{\{x,y\}\}$ can also be represented in $\Delta$, by a collection $P'$ of polyhedra such that $P(v)=P'(v)$ whenever $v\not\in I$. Furthermore, let $\gamma_{xy}$ be a broken line segment in $\R^d$ connecting $P(x)$ to $P(y)$, and suppose that there exists an $\eps_0>0$ such that the $\eps_0$-neighborhood $N(\gamma_{xy},\eps_0)$
of $\gamma_{xy}$ is contained in $\Delta$.
Then $P'$ can be constructed so that if we restrict $P$ and $P'$ to $\Delta\setminus N(\gamma_{xy},\eps_0)$ then they coincide (in other words, 
outside of $N(\gamma_{xy},\eps_0)$, the polyhedra of $P$ are unchanged when $P'$ is constructed). Finally, all of the above can be constructed as a function of $P$ so that it is equivariant with any isometry of $\R^d$.
\end{lemma}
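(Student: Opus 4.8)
The plan is to do surgery along a thin tube around $\gamma_{xy}$, carving a corridor out of the tiles it passes through and routing it through the tiles of the path $I$, so that the new corridor tile (attached, say, to $P(x)$) becomes adjacent to $P(y)$ while the combinatorial structure outside is unaffected. First I would fix a positive $\eps_1 < \eps_0$ small enough that the closed $\eps_1$-tube $N(\gamma_{xy},\eps_1)$ meets exactly those tiles whose closures $\gamma_{xy}$ actually passes through, and so that this tube stays at distance $>\eps_1$ from every face of $\Delta$ and from every $(d-2)$-dimensional skeleton piece of the arrangement $P$ (possible since $\gamma_{xy}$ is a compact broken segment inside the open region where all this data is locally finite). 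Perturbing $\gamma_{xy}$ slightly by an equivariant rule, I may assume it crosses each shared facet it meets transversally in a single point and avoids all lower-dimensional faces; thus the tube $N(\gamma_{xy},\eps_1)$ visits a finite sequence of tiles $P(x)=P(z_0), P(z_1), \dots, P(z_k)=P(y)$, consecutive ones sharing a facet. Note the $z_i$ need not lie on $I$; that is fine, $I$ is only used to guarantee connectivity of $G_0$, hence that such a polygonal route realizing a path in the \emph{adjacency} graph exists — actually the tube itself already exhibits a walk from $x$ to $y$, so I do not even need $I$ except to know the graph is connected. (I would state it via $I$ only because the tube's tile-route is determined by $\gamma_{xy}$.)

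Next, the surgery itself: let $C := N(\gamma_{xy},\eps_1)$. I would replace $P(z_0)$ by $P(z_0)\cup C$ (interior of the union of closures), and replace each $P(z_i)$, $1\le i\le k$, by $P(z_i)\setminus \overline{C}$. To keep everything polyhedral I should not literally use the round tube; instead use a polyhedral tube: replace $C$ by a polygonal ``thickened polyline'' $C'$ with $N(\gamma_{xy},\eps_1/2)\subset C' \subset N(\gamma_{xy},\eps_0)$, built by a fixed equivariant recipe (e.g. a sequence of axis-or-segment-aligned boxes around the segments of $\gamma_{xy}$, mitred at the bends). One must check: (i) each modified tile is still a connected open polytope — true because removing a thin solid tube that enters and exits $P(z_i)$ through its boundary, staying away from $\partial\Delta$ and from edges, leaves a connected set, and adding the tube to $P(z_0)$ along a shared facet keeps it connected; (ii) the new collection still tiles $\Delta$, since we only moved points of $C'$ from the $z_i$, $i\ge1$, into $z_0$, a disjoint reallocation; (iii) every ball meets finitely many tiles — automatic, $P'$ differs from $P$ only inside the compact set $N(\gamma_{xy},\eps_0)$; (iv) the adjacency graph is exactly $G_0 \cup \{\{x,y\}\}$. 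For (iv) the content is: no \emph{spurious} new facet-adjacencies are created and no old ones destroyed, except the single pair $\{x,y\}$ gained. This is where the transversality and the ``$\eps_1$-away from all faces'' conditions pay off: $C'$ meets the interfaces of the arrangement only at the facets between consecutive $z_i$, and there it is a full-dimensional neighborhood of a transversal crossing point, so the old facet between $z_{i-1}$ and $z_i$ still has a $(d-1)$-dimensional remnant (for $i\ge 2$), the old facets of $z_i$ with its other neighbors are untouched outside a compact bite of measure-zero effect on dimension, and the new tile $\overline{P'(z_0)}$ touches $\overline{P(z_k)}=\overline{P(y)}$ across the $(d-1)$-dimensional disk $\partial C' \cap (\text{facet } z_{k-1}z_k)$, producing the edge $\{x,y\}$ and — crucially — it does \emph{not} touch $\overline{P(z_i)}$ for $1\le i\le k-1$ along any $(d-1)$-set, because $C'$ was removed from the interior of those tiles leaving a buffer. (If $z_0$ was already adjacent to $z_k$ in $G_0$ there is nothing to prove; otherwise this is the one new edge.)

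Finally, the conditions in the lemma statement. The requirement $P(v)=P'(v)$ for $v\notin I$: as remarked, the tiles actually changed are $P(x),P(z_1),\dots,P(z_{k-1}),P(y)$, all of which are endpoints or interior vertices of \emph{some} path from $x$ to $y$; to match the stated $I$ exactly I would instead route the polyhedral tube $C'$ so that it stays inside $\bigcup_{v\in I}\overline{P(v)}$ — legitimate because the tiles of $I$ form a connected ``corridor'' in $\Delta$, so a polygonal path from $P(x)$ to $P(y)$ through it exists and can be thickened there; this replaces $\gamma_{xy}$ by an internal route when needed, but if the given $\gamma_{xy}$ and its $\eps_0$-tube already lie in $\Delta$ we just shrink $\eps_0$ and keep $\gamma_{xy}$, and then the changed tiles are exactly those on the tile-route of $\gamma_{xy}$, which we then \emph{define} $I$ (or shrink to) accordingly. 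For the ``coincide outside $N(\gamma_{xy},\eps_0)$'' clause: immediate, since $C'\subset N(\gamma_{xy},\eps_0)$ and all modifications are confined to $C'$. For equivariance: every choice above — the perturbation of $\gamma_{xy}$, the value of $\eps_1$ (take it as a fixed function of the local geometry, e.g. a suitable fraction of the minimum of $\eps_0$ and the distances from $\gamma_{xy}$ to the relevant faces), the mitred polyhedral tube $C'$, the reallocation — is a deterministic function of the pair $(P,\gamma_{xy})$ (equivalently of $P$, once $\gamma_{xy}$ is itself produced equivariantly), hence commutes with isometries of $\R^3$.

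\textbf{Main obstacle.} The delicate point is step (iv): verifying that the polyhedral tube can be threaded and the bites taken so that \emph{precisely one} new facet-adjacency appears and none is lost — i.e. controlling the $(d-1)$-dimensional incidence structure after removing a thin solid tube from a polytope and gluing it onto a neighbor. Everything else (polyhedrality, tiling property, local finiteness, locality of the change, equivariance) is routine once the tube is chosen carefully; the bookkeeping of which closures share a $(d-1)$-face after surgery is the part that needs the transversality normalization and the buffer estimates, and is where I would spend the real effort.
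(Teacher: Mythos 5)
Your overall strategy (surgery along a thin polyhedral tube around $\gamma_{xy}$, confined to $N(\gamma_{xy},\eps_0)$, done by a deterministic equivariant rule) matches the paper's, and you correctly locate the delicate point in your step (iv). But at that point there is a genuine gap, and the assertion you use to close it is false. If you set $P'(x)=P(x)\cup C'$ and $P'(z_i)=P(z_i)\setminus\overline{C'}$ for the intermediate tiles, then for each intermediate $z_i$ the closure of $P'(z_i)$ still contains the lateral surface of the tube inside $P(z_i)$ (the points of $P(z_i)\setminus\overline{C'}$ accumulate on $\partial C'$ from outside the tube), and so does $\overline{P'(x)}$. That intersection is a genuinely $2$-dimensional polygonal surface, so by Definition \ref{maindef}(4) you have created the spurious adjacency $\{x,z_i\}$ for \emph{every} intermediate tile the tube crosses, not just for $y$. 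There is no ``buffer'': you never construct one, and you cannot simply remove a fatter tube from the $P(z_i)$ than you add to $P(x)$, because the leftover annular shell would belong to no tile and the closures would no longer cover $\Delta$.

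The paper's proof supplies exactly the missing idea. Rather than handing the whole tube to $P(x)$ and a bare complement to everyone else, it builds a telescoping system of shells: $\Gamma_0$ is the innermost tube (attached to $P(x)$ and reaching $P(y)$), and each successive tile $D_j$ along $\gamma$ receives, in addition to $D_j$ minus the inner layers, the shell $\Gamma_j\setminus\Gamma_{j-1}$, where $\Gamma_j=\Vor(\cup_{i\le j-1}\Gamma_i,P(y),2^{-j}\eps)$ is a slightly thicker neighborhood of all the previous layers (truncated before it reaches $P(y)$). Inside any downstream tile the region around the tube is then layered, from the inside out, as $x,z_1,z_2,\dots$, so the only $2$-dimensional contacts along the tube are between consecutive tiles in the sequence --- which are already adjacent in $G_0$ because $\gamma$ crosses directly from each into the next --- plus the single new contact of the innermost tube with $P(y)$. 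To repair your proof you would need to add this insulation scheme (or an equivalent one); the rest of your argument --- polyhedrality via $L^\infty$-type neighborhoods, locality, connectivity of the modified tiles, and equivariance --- is in line with the paper.
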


\begin{proof}
It is enough to prove the more restrictive version of the claim, when $\gamma_{xy}$ is given. 
Figure \ref{kocsany} gives an intuitive summary of the coming proof: one can grow a path between $P(x)$ and $P(y)$ while only modifying the $P(v)$ with $v\in I$. 

To explain formally what is summarized on Figure \ref{kocsany}, consider $\gamma=\gamma_{xy}$.
For $A\subset \R^d$, $c>0$, denote by $N^\infty(A,c)$ the $c$-neighborhood of $A$ in the $L^\infty$ norm.
To define this norm, we have to fix the coordinate axes as a deterministic function of P, which is clearly possible by some properly chosen rule that uses the finitely many extremal points of the polyhedra of P.
So in the remainder of this proof, all distances are understood as $L^\infty$, with axes fixed as a fiid from $G$. (The point of using this distance is to have the neighborhoods of line segments be polyhedra.)
Let $ D_1,\ldots, D_m$ be the tiles that $\gamma$ crosses, in this consecutive order as we go from $x$ to $y$.
Pick an $\eps>0$ such that 
$N^\infty(\gamma, \eps)\subset
N(\gamma_{xy},\eps_0)$, and
$D_i\setminus N(\gamma,\eps)$ is connected for every $i$.

Define $\Gamma_1=N^\infty(\gamma, \eps/2)$ and
$D_1'=P(x)\cup \Gamma_1\setminus P(y)$. 
Suppose $\Gamma_{j-1}$ and $D_{j-1}'$ have been defined. Let 
%$\Gamma_j=(\cup_{i=j}^m D_i)\cap \Vor (\Gamma_{j-1}, P(y), (1-2^{-j})\eps)$ 
$\Gamma_j=\Vor (\cup_{i=1}^{j-1}\Gamma_i, P(y),2^{-j}\eps)$
and
$D_j'=D_j\cup\Gamma_j\setminus (\bigcup_{i<j}D_i\cup \Gamma_{j-1})$, where we use the $L^\infty$ distance in the definition of $\Vor$.
To finish, for every $v\in V(G)$ such that $P(v)=D_j$ for some $j$, define $P'(v)=D_j'$. For all other $v\in V(G)$, let $P'(v)=P(v)$.
\end{proof}

\begin{figure}[h]
%\vspace{0.2in}
\begin{center}
\includegraphics[keepaspectratio,scale=0.65]{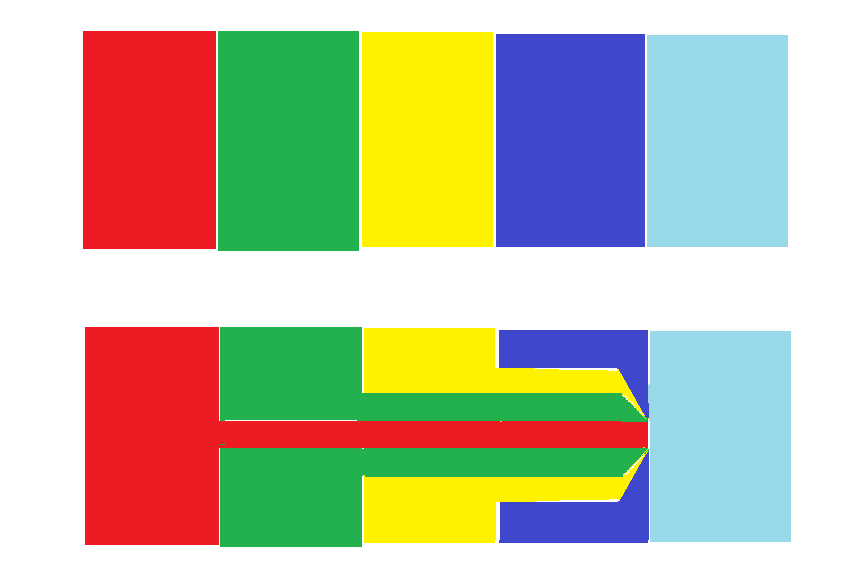}
\caption{Adding an extra edge to a representation by tiles. Note that in dimension at least 3, the slim tunnel between the two tiles does not disconnect the other tiles.}\label{kocsany}
\end{center}
\end{figure}

\def\M{{\cal M}}

The next theorem contains Theorem \ref{mackovilag} as a special case. So far we have only defined tiling representation for transitive graphs or diagrams (Definition \ref{maindef}). If $(G,o)$ is a unimodular random graph, and if a tiling $\calP$ of $\R^d$ is given, one would have to choose a root for $\calP$ to be able to say that $\calP$ is a tiling that represents $(G,o)$. Suppose that $\calP$ is an $\Isom (\R^d)$-invariant tiling of $\R^d$, and suppose that the mean $m$ of ${\rm Vol}(\calP (0))^{-1}$ is finite, where $\calP (0)$ is the tile containing the origin and ${\rm Vol}$ is the volume function.
Choose a random point $t_P$ uniformly in every tile $P\in\calP$. The random set $\{t_P:P\in\calP\}$ is $\Isom (\R^d)$-invariant and it has finite intensity (its intensity is actually $m$, as shown by the continuous version of the MTP). 
Call this random point set the {\it point process generated by} $\calP$.

\begin{definition}{\bf (Tiling representation, unimodular random graphs)}
Let $(G,o)$ be a unimodular random graph, $\calP$ be an $\Isom (\R^d)$-invariant random tiling of $\R^d$, and suppose that $\E({\rm Vol}(\calP (0))^{-1})$ is finite. Consider the point process generated by $\calP$ and condition on having a point in 0 (that is, take the Palm version). If the adjacency graph of $\calP$ with root in $\calP (0)$ is distributed as $(G,o)$, then we say that the tiling $\calP$ {\it represents} $(G,o)$.
\end{definition}

\begin{theorem}\label{Gdomains}{\bf (Amenable unimodular random graphs as tilings)}
Let $G$ be an ergodic amenable unimodular random graph that has one end almost surely, and degrees at most $D$. Then for $d\geq 3$ there is an $\Isom (\R^d)$-invariant random tiling of $\R^d$ that represents $G$, such that every tile is bounded and has finitely many 0-faces. 
The representation can be viewed as a factor of iid map from $V(G)$ to the set of tiles.
\end{theorem}

\begin{proof}
We prove the second assertion first.
Let $T$ be an fiid 1-ended spanning tree percolation of $G$ (that is, $T\subset G$ and $(G,T)$ is unimodular). The existance of such a $T$ is shown in \cite{T3}. (The weaker claim that there is an invariant random spanning tree with 1 or 2 ends was shown in \cite{AL}.) Apply Theorem \ref{Tdomains} to obtain a fiid decoration of $T$ by a tiling $\M$  that represents $T$ by a tiling
in $\R^d$. For every edge $e\in E(G)$ define 
$\pi_e$ to be the path in $T$ that connects the endpoints of $e$.

We will define a collection of broken line segments (``curves'') $\gamma_e\subset \R^d$ as $e\in E(G)\setminus E(T)$, with the following properties:
\begin{enumerate}[label={(\arabic*)}]
\item No two of these curves will intersect. 
\item If $v_0,\ldots, v_n$ are the vertices of $\pi_e$ in consecutive order (so in particular, $v_0$ and $v_n$ are the endpoints of $e$), then $\gamma_e$ connects $\M(v_0)$ and $\M(v_n)$, and crosses no other polyhedron in $\M$ but the $\M(v_i)$, in consecutive order. 
\end{enumerate}
Once this family $\{\gamma_e:\,e\in E(G)\setminus E(T)\}$ is constructed, two consequences will follow. First, every $\M(x)$ is crossed by only finitely many of the $\gamma_e$ (because $x$ is in finitely many of the $\pi_e$). This implies the second important property: the infimum of the distance of a $\gamma_e$ from all the other $\gamma_{e'}$ is positive (because only finitely many of the $\gamma_{e'}$ enter the finitely many polyhedra that $\gamma_e$ crosses). Let this infimum be $\eps_e$($>0$). Now, apply Lemma \ref{addedge} to a $\gamma_e$, with $\eps:=\eps_e/3$, and $P$ the set of tiles in $\M$ that $\gamma_e$ intersects.
Then the modification of the polyhedra of $\M$ happens in pairwise disjoint parts of $\R^d$, hence they can be done simultaneously for all the $e$. Also, every tile of $\M$ is being modified finitely many times (for exactly those $e$ where $\gamma_e$ intersects the tile).
The original $\M$ had the property that any ball in $\R^d$ intersects only finitely many tiles of it. Hence the modified $\M$ will also have this property, because each of the tiles of $\M$ is contained in the union of finitely many new tiles. It follows that the resulting tiling is an fiid representation of $G$, as in Definition \ref{maindef}.
Thus, to obtain a fiid decoration of $G$ by a tiling that represents it, we only have to define the family of curves that satisfies properties (1) and (2) above.

%Let $\bigl((a_n,b_n)\bigr)_{n=1}^\infty$ be some arbitrary enumeration of all the pairs of positive integers.
Let $E_n$ be the set of edges $e$ in $E(G)\setminus E(T)$ that have the following properties: the total number of points in the finite components of $T\setminus \pi_e$ is less than $n$ and $|\pi_e|\leq n$. One can check that
\begin{itemize}
\item the graph $\cup_{e\in E_n} \pi_e$ has only finite components;
\item $\cup_n E_n=E(G)\setminus E(T).$
\end{itemize}
The second item is trivial. To see the first item, suppose that $\cup_{e\in E_n} \pi_e\subset T$ has an infinite component C. 
Then any infinite path in $T$ has all but finitely many points in $C$.
In particular, there is a subpath $x_1,\ldots,x_{2n}$ in $C$ such that $x_1$ separates each of
$x_2,\ldots,x_{2n}$ from infinity. Let $e\in E_n$ be such that $x_1\in \pi_e$. Then the finite 
components of $T\setminus\pi_e$ contain at least $2n-|\pi_e|\geq n$ elements 
of $\{x_1,\ldots, x_{2n}\}$, contradicting the definition of $E_n$.

%The first item follows from noticing that any path $\pi_e$ has a unique point $x$ that is ``closest to infinity'' in $T$, meaning that no point of $\pi_e$ is in the infinite component of $T\setminus\{x\}$. Hence for any two paths $\pi_e$ and $\pi_{e'}$ that intersect each other, one of them is such that its point closest to infinity also separates the other path from infinity in $T$. From this it is easy to deduce the first item above.

As $n=1,2,\ldots$, do the following. For every (finite) component $C$ of $\cup_{e\in E_n} \pi_e$ and $e$ such that $\pi_e\cap C\not=\emptyset$,
define the path $\gamma_e$ such that it satisfies property (2) above. Furthermore, do this in such a way that $\gamma_e$ does not intersect any of the (finitely many) other $\gamma_{e'}$ with $\pi_{e'}\cap C\not=\emptyset$, nor does it intersect any of the finitely many $\gamma_{e'}$ that were defined in some step before $n$ (there are finitely many such $\gamma_{e'}$ that intersect $C$). By construction, the family $\{\gamma_e:\, e\in E(G)\setminus E(T)\}$ has properties (1) and (2).

This family can be used to construct a fiid decoration of $G$ by a tiling that represents $G$, as we have explained above.
To finish, we want to turn this into an $\Isom (\R^d)$-invariant tiling that represents $G$.
First, we can look at the fiid decoration of $V(T)=V(G)$ by the Cayley diagram $\Z^d$, as in Theorem \ref{Tdomains}. On top of this decoration of $G$ by a copy of $\Z^d$, consider the further decoration by a tiling that represents $G$ as constructed above. Apply the Duality lemma (Lemma \ref{dual}), to obtain a decoration of $\Z^d$ which is $\Aut (\Z^d)$-invariant and represents $G$ by tiles. Now take the expansion $\R^d$ of $\Z^d$, which is the space where the tiles are sitting in, and pick a uniform random element $g\in \Isom (\R^d)/\Aut (\Z^d)$. Move $\Z^d$ and the tiles by $g$ in $\R^d$.  The resulting random decorated copy of $\Z^d$ in $\R^d$ is $\Isom (\R^d)$-invariant.
\end{proof}

\begin{proof}[Proof of Theorem \ref{mackovilag}]
Follows as a special case of Theorem \ref{Gdomains}.
\end{proof}

\section{Representing $T_3$ by indistinguishable tiles}\label{s.indisttiles}

\begin{proof}[Proof of Theorem \ref{question}]
Let $G$ be the Cayley diagram of BS(1,2) in the representation $<a,b|a^{-1}ba=b^2>$. 
By Theorem \ref{Gdomains},
the graph underlying $G$ can be represented in $\R^d$, $d\geq 3$, by a
tiling that is $\Isom (\R^d)$ invariant.

Let $\calp$ be the (deterministic) subgraph consisting of edges of $G$ that are colored by $b$ in the Cayley diagram of $G$. The infinite clusters (that we will call fibers) are biinfinite paths, and they are (deterministically) indistinguishable with scenery $(G,\calp)$, because any of them can be taken to any other by an automorphism that preserves the fibers. For each fiber, take the union of the tiles that decorate the vertices in that fiber. This union is a connected infinite tile, because the tiles corresponding to a fiber form a connected set with regard to adjacency of tiles. 
The fibers with their original decorations with tiles are indistinguishable by the Decoration Lemma (Lemma \ref{lemma_fiid}). Hence the unions of the tiles over the fibers are also indistinguishable.

We have concluded that the pieces are indistinguishable. Their adjacency graph is $T_3 $ by definition. This finishes the proof.
\end{proof}

\begin{remark}\label{dim2}{\bf (No $T_3$-tiling in $\R^2$)}
There is no representation of $T_3$ in $\R^2$ by an ergodic random tiling of indistinguishable tiles, as shown by the following proof by contradiction. Suppose there is such a partition, denote by $D_0$ the tile of the origin $o$, and by $D_1,D_2,D_3$ its neighbors. There exists an $r>0$ such that with positive probability there exists a ball $B$ of radius $r$ with center in $D_0$, such that $B$ intersects each of $D_1,D_2$ and $D_3$ (call such balls {\it $D_0$-trifurcating}). Suppose that with positive probability there exist three $D_0$-trifurcating balls $B_1,B_2,B_3$ that are pairwise disjoint. Let the center of $B_i$ be $o_i$, and denote some arbitrarily chosen intersection point of the $i$'th ball with $D_j$ by $x_i^j$, for $j=1,2,3$. There exist paths $P_i^j$ between $o_i$ and $x_i^j$ ($j=1,2,3$) inside $B_i$ whose interiors are pairwise disjoint. Also, all these 9 paths have pairwise disjoint interiors, because the $B_i$ are pairwise disjoint. Furthermore, there is a path $P_j$ within $D_j$ that contains $x_1^j$, $x_2^j$ and $x_3^j$, and such that $P_j\setminus\{x_1^j,x_2^j,x_3^j
\}$ is disjoint from all the previously defined paths. The union $\bigcup_{i,j} P_i^j\cup P_j$ can be regarded as a graph on 12 vertices embedded in $\R^2$. This graph contains $K_{3,3}$ as a minor (contract the paths $P_j$ to one vertex, as $j=1,2,3$), contradicting the fact that this graph is embedded in the plane. This contradiction shows that the probability of having three pairwise disjoint $D_0$-trifurcating balls $B_1,B_2,B_3$ is zero. Therefore, all $D_0$-trifurcating balls have to be within 
the $r$-neighborhoods of at most two $r$-balls a.s.. Call a ball of radius $r$ a {\it trifurcating} ball if its center is in a tile $ D$ and it intersects all the neighboring tiles of $ D$. 
We have seen that the probability of having three pairwise disjoint $D_0$-trifurcating balls is 0. Consequently, since there are countably many tiles $D$ in the tiling, the probability that any of them has 3 pairwise disjoint trifurcating balls is 0.
We know that with positive probability there do exist trifurcating balls. Define a mass transport $(8r^2\pi)^{-1}$
from every point of $D$ to every point of $D$ that is in a trifurcating ball. The expected mass sent out is less than 1. (Here we are applying the continuous version of the MTP, as defined in \cite{BS}.) 
The expected mass received is equal to $\P(o$ is in a trifurcating ball$)\E(\Area(D_0)|o$ is in a trifurcating ball$)$. 
Therefore, conditioned on that $o$ is in a trifurcating ball, $D_0$ has finite area. By ergodicity and indistinguishability, then all tiles have the same, finite area $c$ almost surely. Choose a uniform point in each of the tiles, let the set of all these chosen points be $X$, and consider the graph $T_3$ on $X$ that they inherit from the tiling. By Proposition \ref{nincsfa} such an invariant representation of $T_3$ in $\R^2$ is not possible if the set of vertices has finite intensity as a point process. But then, using invariance, a unit square in $\R^2$ contains infinitely many elements of $X$ with positive probability. Hence the expected number of points of $X$ in this square is infinite, contradicting the fact that a tile of area $c$ contains 1 point of $X$ (which implies by the MTP that a unit cube contains $1/c$ points in expectation). 
\end{remark}

With a simple extra trick, we can prove Theorem \ref{amenindist} similarly to Theorem \ref{question}. 

\begin{proof}[Begin proof of Theorem \ref{amenindist}]
Consider a Cayley diagram that defines the given Cayley graph and take its Cartesian product with a Cayley diagram of $\Z$. The resulting amenable Cayley diagram has a representation by an invariant tiling by Theorem \ref{mackovilag}. Now call the copies of $\Z$ in $G\times \Z$ {\it fibers}, and define a new tiling by taking the union of tiles over every fiber. By the same argument as in the proof of Theorem \ref{question}, we can check that the resulting tiling has indistinguishable pieces. By construction, it will represent $G$.
\end{proof}

\section{Questions and further directions}
An alternative, discrete formulation of the original question remains open:

\begin{question}\label{question1} (Itai Benjamini)
Is there an invariant percolation on $\Z^3$ such that connected components are indistinguishable, and their adjacency graph is the 3-regular tree $T_3$?
\end{question}

Several follow-up questions can be asked about the tiling in our construction, which were raised by Benjamini:

\begin{question}
Consider some invariant random tiling of $\R^d$ ($d\geq 3$) that represents $T_3$. What can we say about the volume growth of the tiles? Can the tiles be convex? 
\end{question}

\begin{remark}
One of these questions was whether there is a ``tree factor'' in $\R^2  \times \R/\Z$. The answer seems to be positive. As long as the space is amenable and BS(0,1) can be invariantly embedded there, our proof seems to work.
\end{remark}

In light of Theorems \ref{question} and \ref{amenindist}, it is natural to ask whether any Cayley graph (or unimodular random graph) $G$ can be represented by indistinguishable tiles. Our proof gives an affirmative answer when $G$ can be obtained by contracting the components of some automorphism-invariant percolation on some amenable transitive graph (diagram).

\begin{question}\label{nagykerdes}
Let $G$ be a transitive graph and $d\geq 3$. When is there an $\Isom (\R^d)$-invariant random tiling representation of $G$ in $\R^d$ with indistinguishable tiles?
\end{question}

A tempting way to tackle the question for a Cayley graph $G$ would be through the following argument. For simplicity, suppose that $G$ is generated by 2 elements, and let $N$ be the normal subgroup such that $F_2/N$ is isomorphic to $G$. A representation of the 4-regular tree $T_4$ by an invariant tiling ${\cal T}$ is possible, similarly to Theorem \ref{question}. By turning $T_4$ into a Cayley diagram of $F_2$, every coset $Ng$ of $N$ will define a set ${\cal C}_g=\cup_{x\in Ng}   T(x)$ where $T(x)\in {\cal T}$ is the tile representing the group element $x$. With proper care the set $\{{\cal C}_g: \, g\in G\}$ will be invariant, and it looks like one could adapt our arguments to show that the pieces ${\cal C}_g$ are indistinguishable. However, the sets ${\cal C}_g$ are generally not connected. The cosets are nonamenable structures (unless $N$ is generated by a single element), hence
the method of Section \ref{s.treetiling} fails in this context, and
it is not clear how one could redefine the sets ${\cal C}_g$ in a consistent way that stabilizes in the limit.

\begin{remark}\label{mindjartvege}
Suppose that $G$ has a representation by an invariant random tiling with indistinguishable tiles in $\R^d$. For any two adjacent tiles $T$ and $T'$ in a given configuration, one can define a perfect matching $\beta_{T,T'}$ between $T$ and $T'$ (thought of as a map from $T$ to $T'$, such that $\beta_{T',T}=\beta^{-1}_{T,T'}$) which is equivariant with $(T,T')$.
Such a construction is possible with the further property that $\beta_{T,T'}$ preserves the Lebesgue measure $\lambda$, i.e., $\lambda(\beta_{T,T'} (A))=\lambda(A)$ for every measurable $A\subset T$. We omit the details here, but a version of the stable matching as in \cite{HHP} can be used to construct $\beta_{T,T'}$. One can make the perfect matching have some further nice properties, such as smoothness outside of a set of measure 0. Now, if $G$ is the left Cayley diagram of a group $\Gamma$, generated by a finite set $S$, and a tiling representation of $G$ as above is given, let $T_o$ be the tile containing the origin of $\R^d$, and $T_v$ be the tile that represents $vo\in\Gamma= V(G)$. Define an action of $\Gamma$ on $\R^d$ as follows. For an $x\in \R^d$ and $g\in S$ define $g(x)= \beta_{T_v,T_{gv}}(x)$ where $x\in T_v$. Hence, to every instance of the random tiling there corresponds a Lebesgue measure preserving action of $\Gamma$ on $\R^d$. Now, let $\Delta$ be the set of all Lebesgue measure preserving bijections up to measure 0 of $\R^d$ to itself. (Alternatively, we could choose $\Delta$ to be the set of piecewise smooth maps, for example.) Then we have just constructed a random subgroup $\Delta'\leq \Delta$ such that $\Delta'$ is isomorphic to $\Gamma$ almost surely. By the $\Isom(\R^d)$-invariance of the tiling and the equivariance of the $\beta_{T,T'}$, $\Delta'$ is invariant under conjugation by $\Isom (\R^d)\leq \Delta$. Such ``partially invariant random subgroups'' might be of interest. See \cite{AGV} for a seminal work on invariant random subgroups.
\end{remark}

\begin{question}\label{kiskerdes}
Let $\Delta$ be the set of all Lebesgue measure preserving bijections of $\R^d$ to itself.
Is there a finitely generated group $G$ such that $\Delta$ has no random subgroup isomorphic to $G$ and invariant under conjugation by $\Isom (\R^d)$?
\end{question}
An example when there is no tiling in Question \ref{nagykerdes} would imply a positive answer to Question \ref{kiskerdes}, by an argument as in Remark \ref{mindjartvege}. Since we do have such partially invariant random subgroups when $G$ is a free group or an amenable group, Kazhdan groups may be the first candidates to find an example for Question \ref{kiskerdes}.

%%%%%%%%%%%%%%%%%%%%%%%%%%%%%%%%%%%%%%%%%%%%%%
%% Single Appendix:                         %%
%%%%%%%%%%%%%%%%%%%%%%%%%%%%%%%%%%%%%%%%%%%%%%
%\begin{appendix}
%\section*{???}%% if no title is needed, leave empty \section*{}.
%\end{appendix}
%%%%%%%%%%%%%%%%%%%%%%%%%%%%%%%%%%%%%%%%%%%%%%
%% Multiple Appendixes:                     %%
%%%%%%%%%%%%%%%%%%%%%%%%%%%%%%%%%%%%%%%%%%%%%%
%\begin{appendix}
%\section{???}
%
%\section{???}
%
%\end{appendix}

%%%%%%%%%%%%%%%%%%%%%%%%%%%%%%%%%%%%%%%%%%%%%%
%% Support information (funding), if any,   %%
%% should be provided in the                %%
%% Acknowledgements section.                %%
%%%%%%%%%%%%%%%%%%%%%%%%%%%%%%%%%%%%%%%%%%%%%%
\section{Acknowledgements}
This work was started at the Bernoulli Center (CIB) conference ``Statistical physics on transitive graphs''. I would like to thank Itai Benjamini, Dorottya Beringer, Damien Gaboriau, Russ Lyons, G\'abor Pete, Mikael De La Salle and Romain Tessera for inspiring conversations, and a referee for useful comments.

The author was supported by a Marie Curie Intra European Fellowship within the 7th European Community Framework Programme, by the Hungarian National Research, Development and Innovation Office, NKFIH grant K109684, and by grant LP 2016-5 of the Hungarian Academy of Sciences.
% 
% The first author was supported by ...
% 
% The second author was supported in part by ...

%%%%%%%%%%%%%%%%%%%%%%%%%%%%%%%%%%%%%%%%%%%%%%%%%%%%%%%%%%%%%
%%                  The Bibliography                       %%
%%                                                         %%
%%  imsart-???.bst  will be used to                        %%
%%  create a .BBL file for submission.                     %%
%%                                                         %%
%%  Note that the displayed Bibliography will not          %%
%%  necessarily be rendered by Latex exactly as specified  %%
%%  in the online Instructions for Authors.                %%
%%                                                         %%
%%  MR numbers will be added by VTeX.                      %%
%%                                                         %%
%%  Use \cite{...} to cite references in text.             %%
%%                                                         %%
%%%%%%%%%%%%%%%%%%%%%%%%%%%%%%%%%%%%%%%%%%%%%%%%%%%%%%%%%%%%%

%% if your bibliography is in bibtex format, uncomment commands:
%\bibliographystyle{imsart-number} % Style BST file (imsart-number.bst or imsart-nameyear.bst)
%\bibliography{bibliography}       % Bibliography file (usually '*.bib')

%% or include bibliography directly:
% \begin{thebibliography}{}
% \bibitem{b1}
% \end{thebibliography}

\noindent
\ \\
{
Alfr\'ed R\'enyi Institute of Mathematics\\
Re\'altanoda u. 13-15, Budapest 1053 Hungary\\
and Division of Mathematics, The Science Institute, University of Iceland\\
Dunhaga 3 IS-107 Reykjavik, Iceland.\\}

\end{document}